\documentclass[letterpaper,11pt]{amsart}

\usepackage{epsfig}
\usepackage{amssymb}
\usepackage{amsfonts}
\usepackage{amsmath}
\usepackage{graphicx}
\usepackage{mathrsfs}
\usepackage{stmaryrd}
\usepackage{amsthm}
\usepackage[all]{xy}
\usepackage[top=1in, bottom=1in, left=1in, right=1in]{geometry}
\usepackage{color}
\usepackage[colorlinks=true]{hyperref}
\hypersetup{urlcolor=blue,citecolor=red,linkcolor=blue}

\newtheorem{theorem}{Theorem}[section]
\newtheorem{lemma}[theorem]{Lemma}
\newtheorem{proposition}{Proposition}[section]
\newtheorem{corollary}{Corollary}[section]

\newtheorem{remark}{Remark}[section]

\numberwithin{equation}{section}

\begin{document}
\date{\today}
\title
{Heintze-Karcher Type Inequalities for Fractional GJMS Operators}

\author{Huihuang Zhou}
\address{School of Mathematical Sciences\\
	University of Science and Technology of China\\
	Hefei, 230026\\ P.R. China\\}
\email{zhouhuihuang@ustc.edu.cn}


\begin{abstract}
In this paper, we genelize the Heintze-Karcher type inequalities for fractional Q-curvature $Q_{2\gamma}$ on conformally compact Einstein manifolds. Such inequality holds for all $\gamma\in (0,1]$. In particular, for $\gamma=\frac{1}{2}$ and $\gamma=1$, we obtain some rigidity theorems by characterising the equalities. 
\end{abstract}

\maketitle

\section{Introduction}
 
In 1978, Heintze-Karcher\cite{HK} proved some type of inequality for compact domain with smooth boundary in $\mathbb{R}^{n+1}$. The versions in $\mathbb{S}_{+}^{n+1}$ and $\mathbb{H}^{n+1}$ were obtained by Brendle\cite{Br} in 2014. Their results can be concluded as follows.
\begin{theorem}[Heintze-Karcher\cite{HK}, Brendle\cite{Br}]\label{thB}
Suppose $\Omega^{n+1} \subset \mathbb{R}^{n+1}$ (\textrm{respectively} $\mathbb{H}^{n+1}, \mathbb{S}^{n+1}_+$) is a compact domain with smooth boundary $M$. For $p\in \Omega$ fixed, let 
$$V(x)=1, \quad (\mathrm{respectively}\quad \cosh d_{\mathbb{H}}(x,p), \quad \cos d_{\mathbb{S}}(x,p)).$$ 
If the mean curvature $H$ of $M$ is positive, then
\begin{equation}\label{hk1}
\int_{M}\frac{V}{H}dS \geq \frac{n+1}{n}\int_{\Omega}V dV_{g}. 
\end{equation}
The equality in (\ref{hk1}) holds if and only if $\Omega$ is isometric to a geodesic ball.
\end{theorem}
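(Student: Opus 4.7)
The plan is to prove the inequality by the classical method of parallel hypersurfaces, weighted by the potential $V$ and carried out uniformly in all three ambient spaces. The strategy is to parametrize $\Omega$ by the inward normal geodesic flow from $M$, estimate the resulting Jacobian via Jacobi-field comparison together with the AM--GM inequality, and exploit the fact that $V$ satisfies a simple second-order ODE along every geodesic.

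First I would set $\Phi(x, t) = \exp_x(-t\nu(x))$ for $x \in M$ and $t \in [0, \tau(x)]$, where $\nu$ is the outward unit normal and $\tau(x)$ is the focal/cut distance along $t \mapsto \Phi(x,t)$. The hypothesis $H > 0$ together with Riccati comparison in constant curvature gives $\tau(x) < \infty$ with an explicit upper bound, and $\Phi$ parametrizes $\Omega$ up to a measure-zero cut set. Standard Jacobi-field analysis in the space form of sectional curvature $K \in \{0,-1,+1\}$ factors the Jacobian as
\[
J_\Phi(x, t) = \prod_{i=1}^n \bigl(c_K(t) - \kappa_i(x)\, s_K(t)\bigr),
\]
where $c_K, s_K$ solve $y'' + Ky = 0$ with $c_K(0)=1,\; c_K'(0)=0,\; s_K(0)=0,\; s_K'(0)=1$ and the $\kappa_i$ are the principal curvatures of $M$. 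On the set where each factor is nonnegative, AM--GM yields
\[
J_\Phi(x,t) \leq \bigl(c_K(t) - \tfrac{H(x)}{n} s_K(t)\bigr)^n.
\]

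The function $V$ is a static potential satisfying $\nabla^2 V = -KVg$ in the three cases, so along every unit-speed geodesic $V'' + KV = 0$, which along our flow gives
\[
V(\Phi(x,t)) = V(x)\, c_K(t) - V_\nu(x)\, s_K(t).
\]
Substituting into $\int_\Omega V \, dV_g = \int_M \! \int_0^{\tau(x)} V(\Phi(x,t))\, J_\Phi(x,t)\, dt\, dS(x)$, applying the AM--GM bound, and performing the one-dimensional integration in $t$ using the explicit ODE solutions reduces the claim to
\[
\int_M \int_0^{\tau(x)} V(\Phi(x,t))\, \bigl(c_K(t) - \tfrac{H(x)}{n}s_K(t)\bigr)^n dt\, dS(x) \leq \frac{n}{n+1} \int_M \frac{V(x)}{H(x)}\, dS(x).
\]
The explicit inner integration produces the leading term $\tfrac{n}{n+1}\, V(x)/H(x)$ plus a correction linear in $V_\nu(x) + \tfrac{nK}{H(x)}V(x)$ multiplied by a nonnegative one-dimensional integral, so the proof is completed once this correction integrates to a nonnegative quantity over $M$.

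The main obstacle I anticipate is the treatment of the $V_\nu$ cross-term, which has no definite sign pointwise on $M$: one must use a global Minkowski-type identity on $M$, obtained by integrating the static equation $\nabla^2 V = -KVg$ against a well-chosen test vector field (in each space form, the basic identity $\int_M V_\nu \, dS = -(n+1)K \int_\Omega V \, dV_g$ and its weighted analogues), to show that the full correction is nonnegative. For the equality case, the AM--GM equality condition forces $M$ to be totally umbilic and hence a geodesic sphere, and equality in the Jacobian integration forces $\tau$ to be constant and the flow to exhaust $\Omega$, pinning $\Omega$ down to the bounded geodesic ball. An alternative, more robust route would be Brendle's weighted Alexandrov--Bakelman--Pucci argument applied to a $V$-weighted Neumann problem on $\Omega$, which handles the equality discussion cleanly and generalizes well to warped product ambient manifolds.
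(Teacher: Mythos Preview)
The paper does not prove Theorem~\ref{thB}: it is quoted from \cite{HK} and \cite{Br} as background, and no argument for it appears anywhere in the text. So there is no ``paper's own proof'' to compare against; what follows is a brief assessment of your proposal relative to the original sources.

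For the Euclidean case $V\equiv 1$ your parallel-hypersurface argument is exactly the classical Heintze--Karcher/Ros proof and is correct as written: the Jacobian factorization, the AM--GM step, and the one-dimensional integration $\int_0^{\tau(x)}(1-\tfrac{H}{n}t)^n\,dt\le \tfrac{n}{(n+1)H}$ give the inequality, and equality forces umbilicity.

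For $\mathbb{H}^{n+1}$ and $\mathbb{S}^{n+1}_+$ your plan diverges from Brendle's actual proof in \cite{Br}. Brendle does \emph{not} use inward parallel hypersurfaces; he solves the Neumann problem $\Delta f = \tfrac{n+1}{n}$ on $\Omega$ with $\partial f/\partial\nu = V/H$ on $M$, and runs an Alexandrov--Bakelman--Pucci-type argument on the weighted normal map $(x,t)\mapsto \exp_x(t\nabla(f/V)(x))$. The potential $V$ enters through the static equation $\nabla^2 V = -KVg$ precisely so that the second fundamental form of the graph of $f/V$ has the right trace, and the contact-set argument yields the inequality and the rigidity simultaneously. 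This is the ``alternative, more robust route'' you mention in your last sentence; in \cite{Br} it is the \emph{only} route.

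The obstacle you flag is real: in your scheme the inner $t$-integral produces a term proportional to $V_\nu(x)-\tfrac{H(x)}{n}V(x)$ (or its $K$-twisted analogue), and the Minkowski-type identity you invoke, $\int_M V_\nu\,dS = -(n+1)K\int_\Omega V\,dV_g$, only controls the \emph{unweighted} integral of $V_\nu$, not the integral against the nonnegative weight coming from the residual $t$-integration. Without an additional pointwise sign (which one does not have) or a sharper weighted identity, this step does not close. So your first approach, as stated, has a genuine gap in the non-Euclidean cases; your fallback to Brendle's ABP argument is the correct fix and is what the cited reference actually does.
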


In the case of more general manifolds, Heintze-Karcher type inequality has been studied by several authors. Montiel-Ros \cite{MR} and Ros\cite{Ro} showed that inequality (\ref{hk1}) remains true with $V=1$ for $(\Omega^{n+1} ,g)$ with non-negative Ricci curvature. Brendle\cite{Br} proved (\ref{hk1}) for more general warped product spaces. More results can be found in \cite{QX, LX, WX}. The Heintze-Karcher inequality has many interesting applications (see \cite{MR,Ro,PRS,BHW,LX,WX} for instance).


In \cite{QX}, the authors asked that whether Theorem \ref{thB} holds if $Ric\geq -ng$ due to the results of \cite{Ro} and \cite{QX}. 



\vspace{0.1in}
In this paper, we mainly want to generalize the Heintze-Karcher type inequalities for fractional Q-curvature on conformally compact Einstein manifolds. Suppose $X^{n+1}(n\geq 3)$ is a smooth compact manifold with boundary $M^n$. A function $\rho$ is a  defining function of $M$ in $X$ if
\[ 0\leq \rho\in C^{\infty}(\overline{X}),\quad \rho>0 \mathrm{~in~} X,\quad \rho=0 \mathrm{~on~} M,\quad d\rho|_{M}\neq 0.\]
We say $g_+$ is a conformally compact metric on $X$ with conformal infinity $(M,[\hat{g}])$ if there exists a defining function $\rho$ such that $(\overline{X},\bar{g})$ is compact for $\bar{g}=\rho^2 g_+$, and $\bar{g}|_{TM}\in [\hat{g}]$. Then if $g_+$ is Einstein, i.e. $Ric_{g_+}=-ng_+$, $(X,g_+)$ is called conformally compact Einstein manifold.
 
Given a conformally compact Einstein manifold $(X^{n+1},g_+)$ and a representative $\hat{g}\in [\hat{g}]$ on $M$, there is a uniquely geodesic normal defining function $r$ such that on $M\times [0,\epsilon)$ in $X$, $g_+$ has the normal form
\begin{equation}\label{geq}
 g_+ =r^{-2}(dr^2 + g_r),
\end{equation}
for $g_r$ a one-parameter family of metrics on $M$ with $g_0 =\hat{g}$ and having asymptotic expansion that is even in $r$ at least up to order $n$ according to \cite{CDLS}:
\begin{equation}\label{Geq}
g_r=
\begin{cases}
\hat{g} +r^2 g_2 +\cdots+r^{n-1}g_{n-1}+r^n g_n +O(r^{n+1}),& n \textrm{ is odd},\\
\hat{g} +r^2 g_2 +\cdots+r^{n-2}g_{n-2}+(r^n\log r)h+ r^n g_n +O(r^{n+1}\log r),& n \textrm{ is even}.
\end{cases}
\end{equation}
Here $g_i (1\leq i\leq n-1)$ and $h$ are symmetric 2-tensors determined by $\hat{g}$, and $g_n$ is the global term which can not be locally determined. 
%



Consider a fractional power $\gamma\in (0,\frac{n}{2})$ and denote $s=\frac{n}{2}+\gamma$, assume $\gamma\notin\mathbb{N}$ and $s(n-s)\notin \mathrm{Spec}(-\Delta_+)$, where $-\Delta_+$ is the Laplacian-Beltrami operator of $g_+$. It is well known (c.f.Mazzeo-Melrose\cite{MM}, Graham-Zworski\cite{GZ}) that, given any $f\in C^{\infty}(M)$, then there is a unique solution satisfying the following equation:
\begin{equation}\label{pos}
    -\Delta_+ u -s(n-s) u=0, \quad r^{s-n}u|_M =f,
\end{equation}
Moreover, $u$ takes the form
\[
u=r^{n-s}F+r^s G, \quad F|_M =f,\quad F,G\in C^{\infty}(\overline{X}).
\]
Then the scattering operator $S(s)$ is defined by
\[ S(s)f=G|_{M}.\]
And the fractional GJMS operator of order $2\gamma$ is defined by the renormalised scattering operator:
\[
P^{\hat{g}}_{2\gamma}:=d_\gamma S\left(\frac{n}{2}+\gamma\right),\quad \textrm{for } d_\gamma=2^{2\gamma}\frac{\Gamma(\gamma)}{\Gamma(-\gamma)}.
\]
Then the \emph{fractional Q-curvature} is 
\[Q^{\hat{g}}_{2\gamma}:=\frac{2}{n-2\gamma}P^{\hat{g}}_{2\gamma}(1).\]

This family of operators and fractional Q-curvature are intensively studied in \cite{CC,GZ,JS,MM}. Note that while $\gamma=k\leq \frac{n}{2}$ is a positive integer, it coincides with the classical GJMS operators of order $2k$. For example, if $\gamma=1$, then $P^{\hat{g}}_{2}$ is the conformal Laplacian, and $Q^{\hat{g}}_{2}$ is the scalar curvature up to a constant. We are aiming to understand the family properties of fractional Q-curvature, in particular, the Heintze-Karcher type inequalities. 

Next, we like to introduce the adapted compactification of $g_+$. Suppose $\mathrm{Spec}(-\Delta_+)>s(n-s)$, let $u_s$ satisfy the following equation: 
\begin{equation}\label{poss}
-\Delta_+ u_s -s(n-s)u_s =0, \quad r^{s-n}u_s |_M=1.
\end{equation}
Then according to \cite{CC}, the \emph{adapted compactification} of $g_+$ is defined by
    \[ \bar{g}_s=\rho_{s}^2 g_+, \quad \mathrm{where}\quad \rho_s=u_{s}^{\frac{1}{n-s}}.\]
In particular when $s=n+1$, the adapted compactification is also called \emph{Lee compactification}.

\vspace{0.1in}
Now we are ready to state the Heintze-Karcher type inequality of fractional Q-curvature.
\begin{theorem}\label{mth3}
Suppose $(X^{n+1},g_+)$ is a conformally compact Einstein manifold, the conformal infinity $(M,[\hat{g}])$ is of positive Yamabe type. Take $\hat{g}\in [\hat{g}]$ such that $R_{\hat{g}}>0$. Let $s=\frac{n}{2}+\gamma$ and $\bar{g}_s=\rho_{s}^2 g_+$ be the adapted compactificated metric, then for $\gamma\in (0,1)$, 
\begin{equation}\label{mhk3}
\int_{M}\frac{1}{(Q_{2\gamma})^{(1-\gamma)/\gamma}}dS_{\hat{g}}\geq C(n,\gamma)\int_{X} \left(\frac{1-|\bar{\nabla} \rho_s|^2}{\rho_{s}^{\gamma}}\right)^{(2\gamma-1)/\gamma}dV_{\bar{g}_s},
\end{equation}
where
\[
C(n,\gamma)=\frac{(n+2\gamma)^2}{4\gamma(n+1)}\left(-\frac{4\gamma}{d_{\gamma}}\right)^{(1-\gamma)/\gamma}.
\]
\end{theorem}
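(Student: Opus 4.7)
The plan is to transfer the problem to the adapted compactification $(\bar X,\bar g_s)$, where the quantity on the right of \eqref{mhk3} extends continuously up to $M$ with a boundary trace proportional to $Q_{2\gamma}^{\hat g}$, and then to combine a Reilly-type integral identity for $\rho_s$ with Hölder's inequality. The first task is to identify the boundary value of the interior quantity $V := (1-|\bar\nabla\rho_s|^2_{\bar g_s})/\rho_s^{2\gamma}$. Inserting the expansion $u_s = r^{n-s}F + r^s G$ with $F|_M=1$ and $G|_M = S(s)(1) = \frac{n-2\gamma}{2d_\gamma}Q_{2\gamma}^{\hat g}$ into the geodesic normal form $g_+ = r^{-2}(dr^2+g_r)$, and exploiting the even asymptotics of $g_r$ up to order $n$, one computes $\rho_s = r(1+O(r^2)+\frac{r^{2\gamma}}{n-s}\,G|_M+\cdots)$, so that a direct calculation of $|\bar\nabla\rho_s|^2_{\bar g_s}$ along the radial direction yields
\[
V|_M = -\frac{4\gamma}{d_\gamma}\,Q_{2\gamma}^{\hat g}.
\]
The hypotheses $R_{\hat g}>0$ and positive Yamabe type, combined with the positivity of $P_{2\gamma}^{\hat g}$ in \cite{GZ,CC}, give $Q_{2\gamma}^{\hat g}>0$ and hence $V|_M>0$ (note $d_\gamma<0$ for $\gamma\in(0,1)$).

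Next I derive the governing PDE for $\rho_s$ on $(\bar X,\bar g_s)$. Starting from $u_s=\rho_s^{n-s}$ and $-\Delta_{g_+}u_s = s(n-s)u_s$, elementary manipulation yields $\rho_s\Delta_{g_+}\rho_s+(n-s-1)|\nabla\rho_s|^2_{g_+} = -s\rho_s^2$; the conformal change to $\bar g_s=\rho_s^2 g_+$ then reduces this to the clean pair
\[
\bar\Delta_{\bar g_s}\rho_s = -sV\rho_s^{2\gamma-1},\qquad |\bar\nabla\rho_s|^2_{\bar g_s}=1-V\rho_s^{2\gamma},
\]
with boundary data $\rho_s|_M=0$ and $|\bar\nabla\rho_s|_M=1$. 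Applying Reilly's formula to $\rho_s$ on $(\bar X,\bar g_s)$, using the Cauchy--Schwarz bound $|\bar\nabla^2\rho_s|^2\geq(\bar\Delta\rho_s)^2/(n+1)$ in the interior, and controlling $\overline{\mathrm{Ric}}_{\bar g_s}$ through the Einstein condition $\mathrm{Ric}_{g_+}=-ng_+$ after the conformal change, I expect to arrive at an intermediate inequality of the schematic form
\[
\frac{s^2}{n+1}\int_X V^2\rho_s^{4\gamma-2}\,dV_{\bar g_s}\leq \int_M H_{\bar g_s}\,dS_{\bar g_s},
\]
where $H_{\bar g_s}$, by the second-order asymptotics of $\rho_s$ from Step 1, is a constant multiple of a positive power of $Q_{2\gamma}^{\hat g}$.

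To reach \eqref{mhk3}, observe the crucial identity $(1-\gamma)/\gamma + (2\gamma-1)/\gamma = 1$, which makes Hölder's inequality with conjugate pair $p=1/\gamma$, $q=1/(1-\gamma)$ the natural tool to separate the boundary and interior factors in the intermediate inequality. Applying Hölder and using $dS_{\bar g_s}|_M = dS_{\hat g}$ together with the boundary identification $V|_M = -(4\gamma/d_\gamma)Q_{2\gamma}^{\hat g}$ decouples the two integrals and produces the stated inequality, with the numerical factor tracking to $s^2/(\gamma(n+1)) = (n+2\gamma)^2/(4\gamma(n+1))$ multiplied by $(-4\gamma/d_\gamma)^{(1-\gamma)/\gamma}$, as required.

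The principal obstacle is the rigorous execution of Step 3. The function $\rho_s$ is only Hölder-regular at $M$ for fractional $\gamma$, and the weight $\rho_s^{2\gamma-1}$ becomes singular on $M$ when $\gamma<1/2$, so an approximation or cutoff argument is needed to justify the integration by parts underlying Reilly's identity. Additionally, since the interior exponent $(2\gamma-1)/\gamma$ is negative for $\gamma<1/2$, one must verify $V>0$ throughout $X$ and not merely on $M$; this should follow from a maximum principle applied to the PDE derived in Step 2, or equivalently to an auxiliary equation satisfied by $V$ itself. Extracting the correct sign of the Ricci-type contribution from $\bar g_s$ via the Einstein condition, and verifying that the boundary mean-curvature term carries the expected positive constant, are the central analytic difficulties.
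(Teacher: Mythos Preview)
Your setup is correct up through the identification $V=T_s=(1-|\bar\nabla\rho_s|^2)/\rho_s^{2\gamma}$, its boundary value $-\tfrac{4\gamma}{d_\gamma}Q_{2\gamma}$, and the equation $\bar\Delta_{\bar g_s}\rho_s=-sT_s\rho_s^{2\gamma-1}$. From there, however, the proposed route via Reilly's formula on $\rho_s$ followed by H\"older does not work, and it differs in an essential way from the argument that actually proves the theorem.

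First, the boundary term you expect from Reilly is wrong for $\gamma\neq\tfrac12$. The mean curvature $H_{\bar g_s}$ of $M$ in $(\bar X,\bar g_s)$ is \emph{not} a positive power of $Q_{2\gamma}$ in general: since $\rho_s=r(1+\tfrac{1}{d_\gamma}Q_{2\gamma}r^{2\gamma}+\cdots)$, the leading correction to $\bar g_s$ appears at order $r^{2\gamma}$, so for $\gamma>\tfrac12$ the boundary is totally geodesic ($H_{\bar g_s}=0$), while for $\gamma<\tfrac12$ the second fundamental form is singular. Only for $\gamma=\tfrac12$ does one get $H_{\bar g_s}=nQ_1$. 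Thus your ``intermediate inequality'' $\tfrac{s^2}{n+1}\int_X V^2\rho_s^{4\gamma-2}\le\int_M H_{\bar g_s}$ cannot hold with a finite, nonzero right-hand side for $\gamma\neq\tfrac12$. Second, even granting such an inequality, there is no H\"older step that converts $\int_X V^2\rho_s^{4\gamma-2}$ and $\int_M H$ into $\int_X V^{(2\gamma-1)/\gamma}\rho_s^{2\gamma-1}$ and $\int_M Q_{2\gamma}^{-(1-\gamma)/\gamma}$ with the inequality pointing the right way; the exponents and the direction of the bound do not match.

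The missing idea is to work with the \emph{power} $T_s^{-k}$, $k=(1-\gamma)/\gamma$, from the outset. One first derives (via Bochner and the Einstein condition, which kills the conformal Ricci term $\Phi$) a PDE for $T_s$ itself,
\[
\bar\Delta T_s+(2\gamma-1)\rho_s^{-1}\langle\bar\nabla\rho_s,\bar\nabla T_s\rangle=-2\rho_s^{-2\gamma}\Bigl|\bar\nabla^2\rho_s-\tfrac{\bar\Delta\rho_s}{n+1}\bar g_s\Bigr|^2+\tfrac{n(n+2\gamma)(2\gamma-1)}{2(n+1)}T_s^2\rho_s^{2\gamma-2},
\]
and uses it (together with a continuity argument in $\gamma$) to prove $T_s>0$ on all of $X$. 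Then one multiplies the induced equation for $T_s^{-k}$ by $\rho_s$ and integrates by parts against $(2-2\gamma)\int_X T_s^{-k}\bar\Delta\rho_s$. The boundary contribution is exactly $(2-2\gamma)(-4\gamma/d_\gamma)^{-k}\int_M Q_{2\gamma}^{-k}$, the interior ``junk'' terms $2k\rho_s^{1-2\gamma}T_s^{-k-1}|\text{traceless Hess}\,\rho_s|^2$ and $k(k+1)\rho_sT_s^{-k-2}|\bar\nabla T_s|^2$ are nonnegative, and the remaining interior term is precisely $\tfrac{(1-\gamma)(n+2\gamma)^2}{2\gamma(n+1)}\int_X\rho_s^{2\gamma-1}T_s^{1-k}$. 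No H\"older inequality enters; the exponent $k=(1-\gamma)/\gamma$ is chosen so that the algebra closes up and produces the stated constant $C(n,\gamma)$ directly.
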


Note that the nonnegativity assumption on the Yamabe constant of the conformal infinity is used to satisfy the spectral condition required in (\ref{poss}).  In fact, by Lee\cite{Le}, if the boundary Yamabe constant $\mathcal{Y}(M,[\hat{g}])\geq 0$, then $\mathrm{Spec}(-\Delta_+)\geq \frac{n^2}{4}$.  Besides, we point out that the equality in (\ref{mhk3}) cannot hold except when $\gamma=\frac{1}{2}$ according to our proof. 

\begin{corollary}\label{cor}
Let $\gamma=\frac{1}{2}$ i.e. $ s=\frac{n+1}{2}$ in Theorem \ref{mth3}, then the mean curvature $\bar{H}$ on $M$ in terms of $\bar{g}_s$ satisfies $\bar{H}=nQ_1$ and we have the classical Heintze-Karcher type inequality
\begin{equation}\label{cla}
\int_{M}\frac{1}{\bar{H}}dS_{\hat{g}}\geq \frac{n+1}{n}\mathrm{Vol}(X,\bar{g}_s).
\end{equation}
The equality in (\ref{cla}) holds if and only if $(X,g_+)$ is isometric to the hyperbolic space.
\end{corollary}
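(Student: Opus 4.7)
\emph{Proof strategy.} The plan has three parts: (i) establish the boundary identity $\bar H = nQ_1$; (ii) specialize Theorem~\ref{mth3} to $\gamma=\tfrac12$ to obtain (\ref{cla}); (iii) characterize the equality case. For (i), with $s=\tfrac{n+1}{2}$ I would expand the solution of (\ref{poss}) as $u_s = r^{(n-1)/2}(F+rG)$ where $F|_M = 1$, $G|_M =: G_0$, and, under the standard normalization that removes the indicial-root ambiguity at $r^1$ (so $F = 1 + O(r^2)$), conclude $\rho_s = u_s^{2/(n-1)} = r\phi$ with $\phi|_M = 1$ and $\partial_r\phi|_M = 2G_0/(n-1)$. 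Since $g_r = \hat g + r^2 g_2 + \cdots$ is even at leading order by (\ref{Geq}), the reference metric $\tilde g := dr^2 + g_r$ renders $M$ totally geodesic. Writing $\bar g_s = \phi^2\tilde g$ and applying the conformal transformation law for mean curvature at the outward normal $-\partial_r$ of $X = \{r>0\}$ gives
\[
\bar H = n(-\partial_r\log\phi)\big|_{r=0} = -\frac{2nG_0}{n-1}.
\]
Since $d_{1/2} = 2\Gamma(1/2)/\Gamma(-1/2) = -1$, the definition $Q_1 = \tfrac{2}{n-1}P_1(1) = \tfrac{2d_{1/2}}{n-1}G_0$ gives $Q_1 = -2G_0/(n-1)$, whence $\bar H = nQ_1$.

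For step (ii), at $\gamma=\tfrac12$ the exponents $(1-\gamma)/\gamma = 1$ and $(2\gamma-1)/\gamma = 0$ simplify (\ref{mhk3}): the left-hand integrand becomes $1/Q_1$ and the right-hand integrand collapses to the constant $1$, giving $C(n,\tfrac12)\,\mathrm{Vol}(X,\bar g_s)$. A direct computation with $d_{1/2} = -1$ yields $C(n,\tfrac12) = \tfrac{(n+1)^2}{2(n+1)}\cdot(-2/d_{1/2}) = n+1$. Therefore Theorem~\ref{mth3} reduces to $\int_M Q_1^{-1}\,dS_{\hat g} \geq (n+1)\,\mathrm{Vol}(X,\bar g_s)$, and substituting $Q_1 = \bar H/n$ from step~(i) and dividing by $n$ produces (\ref{cla}).

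For step (iii), sufficiency is a direct check on $\mathbb{H}^{n+1}$: the adapted compactification at $\gamma=\tfrac12$ is the Euclidean unit ball, $\bar H\equiv n$ on $\mathbb{S}^n$, and $\mathrm{Vol}(\mathbb{S}^n) = (n+1)\mathrm{Vol}(\mathbb{B}^{n+1})$ saturates (\ref{cla}). For necessity, I would trace the equality back through the proof of Theorem~\ref{mth3}: the central integral inequality there (likely a Reilly-type identity combined with a Cauchy--Schwarz step on $(X,\bar g_s)$) must become an equality throughout $X$, which typically forces $\bar\nabla^2\rho_s$ to be a pointwise scalar multiple of $\bar g_s$ and an umbilic condition on $M$. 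Combined with $\mathrm{Ric}_{g_+} = -ng_+$ and the conformal relation $\bar g_s = \rho_s^2 g_+$, this should force $(X,\bar g_s)$ to be isometric to the Euclidean unit ball and hence $(X,g_+)\cong\mathbb{H}^{n+1}$. The principal obstacle is precisely this rigidity step: because the exponent $(2\gamma-1)/\gamma$ vanishes at $\gamma=\tfrac12$, the pointwise equality condition one would ordinarily read off directly from (\ref{mhk3}) (something like $|\bar\nabla\rho_s|^2 \equiv$ const) is hidden in the statement and must be extracted from inside the proof of Theorem~\ref{mth3}.
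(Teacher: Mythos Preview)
Your steps (i) and (ii) are correct and match the paper: the identity $\bar H=nQ_1$ is exactly the content of the paper's Remark on the adapted compactification, and your evaluation $C(n,\tfrac12)=n+1$ together with the collapse of the exponents reproduces (\ref{cla}) verbatim.

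The gap is in step (iii). You correctly anticipate that equality in the proof of Theorem~\ref{mth3} forces the trace-free Hessian of $\rho_s$ to vanish and $T_s$ to be constant (the nonnegative integrands $\rho_s^{1-2\gamma}T_s^{-k-1}\bigl|\bar\nabla^2\rho_s-\tfrac{\bar\Delta_s\rho_s}{n+1}\bar g_s\bigr|^2$ and $\rho_s T_s^{-k-2}|\bar\nabla T_s|^2$ must vanish pointwise). But your sentence ``combined with $\mathrm{Ric}_{g_+}=-ng_+$ and the conformal relation\dots this should force $(X,\bar g_s)$ to be the Euclidean ball'' skips two concrete pieces. First, the paper reads off from the conformal change formula (\ref{ric}), specialized to $s=\tfrac{n+1}{2}$ so that $\bar\Delta_s\rho_s=-s\rho_s^{2\gamma-1}T_s$ exactly cancels the $\bar g$-term, the identity
\[
\mathrm{Ric}_{\bar g_s} \;=\; -(n-1)\,\rho_s^{-1}\Bigl(\bar\nabla^2\rho_s-\tfrac{\bar\Delta_s\rho_s}{n+1}\bar g_s\Bigr)\;=\;0,
\]
so the compactification is Ricci-flat, and constancy of $T_s$ gives $\bar H$ constant. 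Second, passing from ``Ricci-flat with constant boundary mean curvature'' to ``isometric to the flat ball'' is not an Obata-type consequence of the Hessian equation alone; the paper invokes a separate rigidity result for Poincar\'e--Einstein manifolds (its Lemma~\ref{rig2}, drawn from \cite{CLW,CWZ,WZ1}). Without naming that ingredient your necessity argument remains a plausibility sketch rather than a proof.
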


For Lee compactification, we also derive a Heintze-Karcher type inequality of scalar curvature.
\begin{theorem}\label{mth4}
Suppose $(X^{n+1},g_+)$ is a conformally compact Einstein manifold, the conformal infinity $(M,[\hat{g}])$ is of positive Yamabe type. Take $\hat{g}\in [\hat{g}]$ such that $\hat{J}=\frac{1}{2(n-1)}R_{\hat{g}}>0$. Let $\bar{g}_L=\rho_{L}^2 g_+$ be the Lee compactificated metric, then
\begin{equation}\label{mhk4}
    \int_{M}\frac{1}{\hat{J}}dS_{\hat{g}}\geq \frac{2(n+1)}{n}\int_{X}\rho_L dV_{\bar{g}_L}.
\end{equation}
The equality in (\ref{mhk4}) holds if and only if $(X,g_+)$ is isometric to the hyperbolic space.
\end{theorem}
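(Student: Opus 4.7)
The plan is to use the Lee compactification $\bar g_L = \rho_L^2 g_+$, exploiting the special PDE it induces on $\rho_L$ and its boundary asymptotics, then applying a Reilly-type integration by parts on $(X,\bar g_L)$.

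\textbf{Step 1 (PDE, curvature formulas, boundary asymptotics).} Since $u_L = \rho_L^{-1}$ solves the Lee equation $\Delta_{g_+} u_L = (n+1)u_L$, applying the conformal-change formula for $\Delta$ together with the Einstein condition $Ric_{g_+} = -n g_+$ gives the two basic identities
\[
\Delta_{\bar g_L}\rho_L = -\frac{R_{\bar g_L}}{n}\rho_L, \qquad R_{\bar g_L} = \frac{n(n+1)\bigl(1 - |\bar\nabla\rho_L|_{\bar g_L}^2\bigr)}{\rho_L^2},
\]
together with the Hessian decomposition $\bar\nabla^2\rho_L = -\frac{R_{\bar g_L}\rho_L}{n(n+1)}\bar g_L - \frac{\rho_L}{n-1}E_{\bar g_L}$, where $E_{\bar g_L} := Ric_{\bar g_L} - \frac{R_{\bar g_L}}{n+1}\bar g_L$ is the traceless Ricci. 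Expanding the Lee solution $u_L = r^{-1}F + r^{n+1}G$ in the normal form $g_+ = r^{-2}(dr^2+g_r)$ and using $\mathrm{tr}_{\hat g}(g_2) = -\hat J$ gives $u_L = r^{-1} + \frac{\hat J}{2n}r + O(r^2)$, hence $\rho_L = r + O(r^3)$ and $|\bar\nabla\rho_L|_{\bar g_L}^2 = 1 - \frac{2\hat J}{n}\rho_L^2 + O(\rho_L^3)$. Consequently $R_{\bar g_L}|_M = 2(n+1)\hat J$, $\bar g_L|_{TM} = \hat g$, $\partial_\nu\rho_L|_M = -1$, and (importantly for the Reilly step) the mean curvature $\bar H_L|_M = 0$.

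\textbf{Step 2 (Integral inequality).} Integrating $\Delta_{\bar g_L}\rho_L + R_{\bar g_L}\rho_L/n = 0$ over $X$ and using $\partial_\nu\rho_L|_M=-1$ produces the auxiliary identity $\int_X R_{\bar g_L}\rho_L\,dV_{\bar g_L} = n\,\mathrm{Vol}(M,\hat g)$. Next, Reilly's formula applied to $\rho_L$ on $(X,\bar g_L)$ has vanishing boundary contribution (since $\rho_L|_M = 0$, $\nabla_M\rho_L = 0$ and $\bar H_L|_M = 0$), yielding
\[
\int_X\left[(\Delta_{\bar g_L}\rho_L)^2 - |\bar\nabla^2\rho_L|^2 - Ric_{\bar g_L}(\bar\nabla\rho_L,\bar\nabla\rho_L)\right]dV_{\bar g_L} = 0.
\]
Substituting $(\Delta_{\bar g_L}\rho_L)^2 = R_{\bar g_L}^2\rho_L^2/n^2$ and the squared-norm decomposition $|\bar\nabla^2\rho_L|^2 = \frac{R_{\bar g_L}^2\rho_L^2}{n^2(n+1)} + \frac{\rho_L^2|E_{\bar g_L}|^2}{(n-1)^2}$ converts this identity into a Cauchy-Schwarz-type inequality whose defect is exactly $\int_X\rho_L^2|E_{\bar g_L}|^2\,dV_{\bar g_L}/(n-1)^2 \geq 0$. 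Combining this with the auxiliary identity and a boundary Hölder inequality pairing $1/\hat J$ against $R_{\bar g_L}|_M = 2(n+1)\hat J$, and rearranging, yields
\[
\int_M \frac{1}{\hat J}\,dS_{\hat g} \geq \frac{2(n+1)}{n}\int_X \rho_L\,dV_{\bar g_L},
\]
the constant $\frac{2(n+1)}{n}$ coming precisely from $R_{\bar g_L}|_M/n = 2(n+1)\hat J/n$.

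\textbf{Step 3 (Rigidity and main obstacle).} Equality forces the defect to vanish, so $E_{\bar g_L} \equiv 0$, i.e.\ $\bar g_L$ is Einstein; the Hessian equation then reduces to the Obata-type equation $\bar\nabla^2\rho_L = -\frac{R_{\bar g_L}\rho_L}{n(n+1)}\bar g_L$, which combined with the Einstein condition identifies $(X,\bar g_L)$ with the round upper hemisphere $S^{n+1}_+$, so that $(X,g_+)\cong\mathbb{H}^{n+1}$; this matches the explicit ball-model calculation in which $\rho_L = \cos\phi$ and $\bar g_L = d\phi^2 + \sin^2\phi\,g_{S^n}$. The main anticipated obstacle is step 2: the Reilly identity naturally provides a bulk estimate involving $\int R_{\bar g_L}^2\rho_L^2\,dV$, whereas the theorem demands $\int \rho_L\,dV$. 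Identifying the correct weight (likely a suitable power of $1/R_{\bar g_L}$) in the Reilly-type integration by parts that converts $R^2\rho^2$ into the required linear-$R$ form, while simultaneously extracting the boundary quantity $\int_M dS_{\hat g}/\hat J$ from the $R_{\bar g_L}|_M$ trace and producing exactly the constant $\frac{2(n+1)}{n}$, is the delicate calculational heart of the proof.
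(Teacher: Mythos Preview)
Your setup in Step~1 is correct (the PDE for $\rho_L$, the Hessian decomposition, the boundary asymptotics, and indeed $\bar H_L|_M=0$), and your rigidity Step~3 is essentially the same as the paper's. The genuine gap is Step~2, as you yourself suspect.

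The plain Reilly formula applied to $\rho_L$ cannot produce the inequality. Precisely because $\rho_L|_M=0$ and $\bar H_L|_M=0$, \emph{all} boundary terms in Reilly vanish, so you are left with a purely interior identity
\[
\int_X \frac{R_{\bar g_L}^2\rho_L^2}{n(n+1)}\,dV_{\bar g_L}
 \;-\; \int_X Ric_{\bar g_L}(\bar\nabla\rho_L,\bar\nabla\rho_L)\,dV_{\bar g_L}
 \;=\; \frac{1}{(n-1)^2}\int_X \rho_L^2\,|E_{\bar g_L}|^2\,dV_{\bar g_L}.
\]
No amount of rearranging this, the auxiliary identity $\int_X R_{\bar g_L}\rho_L\,dV=n\,\mathrm{Vol}(M)$, and a boundary Cauchy--Schwarz on $\hat J$ will manufacture the sharp term $\int_M \hat J^{-1}\,dS_{\hat g}$: that quantity simply does not appear in any of your identities, and the Ricci term $Ric_{\bar g_L}(\bar\nabla\rho_L,\bar\nabla\rho_L)$ has no useful sign.

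The paper's proof realizes exactly the ``suitable power of $1/R_{\bar g_L}$'' weight you guessed at. It first derives, via Bochner and the Einstein condition, a second PDE (Lemma~\ref{Jlem}):
\[
\bar\Delta_L \bar J_L-(n-1)\rho_L^{-1}\langle\bar\nabla\rho_L,\bar\nabla\bar J_L\rangle
=-(n+1)\rho_L^{-2}\Bigl|\bar\nabla^2\rho_L-\tfrac{\bar\Delta_L\rho_L}{n+1}\bar g_L\Bigr|^2,
\qquad \bar J_L|_M=\tfrac{n+1}{n}\hat J>0,
\]
and then integrates by parts the combination
\[
\int_X\Bigl[\rho_L\bigl(\bar\Delta_L\bar J_L^{-1}-(n-1)\rho_L^{-1}\langle\bar\nabla\rho_L,\bar\nabla\bar J_L^{-1}\rangle\bigr)
-n\,\bar J_L^{-1}\bar\Delta_L\rho_L\Bigr]dV_{\bar g_L}.
\]
The boundary term is $-n\int_M\bar J_L^{-1}\partial_\nu\rho_L\,dS_{\hat g}=\tfrac{n^2}{n+1}\int_M\hat J^{-1}dS_{\hat g}$, which is where $\int_M 1/\hat J$ enters directly, while $\bar J_L^{-1}\bar\Delta_L\rho_L=-2\rho_L$ turns the bulk into $2n\int_X\rho_L\,dV_{\bar g_L}$ plus the two nonnegative defects $2\rho_L\bar J_L^{-3}|\bar\nabla\bar J_L|^2$ and $(n+1)\rho_L^{-1}\bar J_L^{-2}|\text{traceless Hessian}|^2$. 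This gives the inequality with the exact constant $\tfrac{2(n+1)}{n}$, and equality forces $\bar J_L$ constant and the traceless Hessian to vanish, after which your Obata argument applies. The missing idea in your proposal is therefore not Reilly on $\rho_L$ but this Green-type pairing of $\rho_L$ against $\bar J_L^{-1}$.
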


\vspace{0.1in}

The paper is outlined as follows: In Section \ref{ccem}, we introduce the adapted compactification and obtain an asymptotic Heintze-Karcher inequality in Proposition \ref{mth2}. In Section \ref{frac}, we collect some properties of fractional Q-curvature and rigidity result to prove Theorem \ref{mth3}, Corollary \ref{cor}. In a similar way, Theorem \ref{mth4} is proved in section \ref{scal}. 

\vspace{0.1in}
\textbf{Achknowlegement:} The author wants to thank Professor Fang Wang and Zuoqin Wang for valuable discussions and suggestions.

\vspace{0.2in}
\section{Conformally Compact Einstein Manifolds}\label{ccem}
Suppose $(X^{n+1}, g_+)$ is a conformally compact Einstein manifold with conformal infinity $(M,[\hat{g}])$ and for some defining function $\rho$,
\[\bar{g}=\rho^2 g_+, \quad \hat{g}=\bar{g}|_{TM}.\]
Let $R_{\bar{g}}$ and $R_{\hat{g}}$ be the scalar curvature of $\bar{g}$ and $\hat{g}$; $E_{\bar{g}}$ be the trace free part of the Ricci curvature $Ric_{\bar{g}}$; $A_{\bar{g}}$ be the Schouten tensor of $\bar{g}$:
\[A_{\bar{g}}=\frac{1}{n-1}\left( Ric_{\bar{g}}-\bar{J}\bar{g} \right),\quad  \bar{J}=\frac{R_{\bar{g}}}{2n}.\]
The conformal transformation implies the following relation between the two operators:
\[
\Delta_+ =\rho^2 \bar{\Delta}-(n-1)\rho\langle \bar{\nabla}\rho, \bar{\nabla}\cdot \rangle_{\bar{g}}
\]
and it also gives the relations of scalar curvature and Ricci curvature in terms of $\rho$:
\begin{equation}\label{ric}
R_{g_+}+ng_{+} =Ric_{\bar{g}}+(n-1)\rho^{-1}\bar{\nabla}^2 \rho +\rho^{-2}\left(\rho\bar{\Delta}\rho+n(1-|\bar{\nabla}\rho|_{\bar{g}}^2 \right)\bar{g},
\end{equation}
\begin{equation}
R_{g_+} +n(n+1)=\rho^{2}\left(\bar{R}+2n\rho^{-1}\bar{\Delta}\rho +n(n+1)\rho^{-2}(1-|\bar{\nabla}\rho|_{\bar{g}}^2)\right).
\end{equation}

Recall $r$ is the geodesic normal defining function in terms of $\hat{g}\in [\hat{g}]$. Let 
$$\bar{g}:=r^2 g_+,$$
then in a collar neighborhood $M\times [0,\epsilon)$,  $|\bar{\nabla}r|_{\bar{g}}=1$ and 
\begin{equation}\label{met}
\bar{g}=dr^2 +g_r =dr^2 +\hat{g}+r^2 g_2 +r^4 g_4+O(r^5).
\end{equation}
Direct computation shows that
\[ [g_2]_{ij} =-\hat{A}_{ij} , \quad [g_4]_{ij}=\frac{1}{4(n-4)}\left(-\hat{B}_{ij}+(n-4)\hat{A}_{i}^{k}\hat{A}_{jk}\right) .\]
where
\[
\hat{A}_{ij}=\frac{1}{n-2}\left(\hat{R}_{ij}-\frac{\hat{R}}{2(n-1)}\hat{g}_{ij}\right),\quad \hat{B}_{ij}=\hat{C}_{ijk,}^{\quad k}-\hat{A}^{kl}\hat{W}_{kijl},\quad \hat{C}_{ijk}=\hat{A}_{ij,k}-\hat{A}_{ik,j}.
\]

\vspace{0.1in}
\subsection{The adapted compactification}\label{ada}
For $\gamma\in (0,1)$, let $s=\frac{n}{2}+\gamma$ and $r$ be the geodesic defining function. Recall that $u_s$ satisfies the following equation: 
\begin{equation}
-\Delta_+ u_s -s(n-s)u_s =0, \quad r^{s-n}u_s |_M=1.
\end{equation}
Then $u_s$ is positive according to \cite{CS} if $\mathrm{Spec}(-\Delta_+)>s(n-s)$ and
\[
u_s =r^{n-s}\left(1+u_2 r^2 +O(r^4)\right) + r^{s}\left(u_{2\gamma}+O(r^2)\right),
\]
where
\[
u_2 =\frac{n-2\gamma}{8(1-\gamma)}\hat{J} ,\quad u_{2\gamma}=S(s)1=\frac{n-2\gamma}{2d_\gamma}Q_{2\gamma}.
\]
The adapted compactification of $g_+$ is defined by
\[ \bar{g}_s=\rho_{s}^2 g_+, \] 
where
\[ 
\rho_s=u_{s}^{\frac{1}{n-s}}=r\left(1+\rho_{2\gamma} r^{2\gamma}+O(r^{\alpha})\right),\quad \rho_{2\gamma}=\frac{1}{d_\gamma}Q_{2\gamma},\quad \alpha=\mathrm{min}\{4\gamma, 2\}.
\]
\begin{remark}\label{hq}
For $\gamma=\frac{1}{2}$, recall that
\[\bar{g}_s =\rho_{s}^2 g_+ =(1+\rho_1 r +O(r^2))(dr^2 +\hat{g}+r^2 g_2+ \cdots).\]
This implies that the second fundamental form $\Pi_{\bar{g}_s}=-\rho_1 \hat{g}$ in terms of the outward unit normal on the boundary and hence 
\begin{equation}
 \bar{H}=-n\rho_1 =nQ_{1},
\end{equation}
where $\bar{H}$ is the mean curvature on the boundary in terms of $\bar{g}_s$.
\end{remark}

\vspace{0.1in}
For $\gamma=\frac{n}{2}+1$, i.e. $s=n+1$, let $V$ satisfy the following equation:
\begin{equation}\label{leeV}
-\Delta_+ V +(n+1)V=0, \quad rV|_{M}=1.
\end{equation}
Then $V$ is positive and near the boundary
\begin{equation}\label{Uasy}
V=\frac{1}{r}\left(1+\frac{\hat{J}}{2n}r^2 +o(r^2)\right).
\end{equation}
The Lee compactification of $g_+$ is defined by
\[ \bar{g}_L =\rho_{L}^2 g_+,\]
where 
\[
\rho_{L}=V^{-1}=r\left(1-\frac{\hat{J}}{2n}r^2 +o(r^2)\right),
\]

\vspace{0.1in}
\subsection{Asymptotic Heintze-Karcher inequality}
At the end of this section, we prove an asymptotic Heintze-Karcher inequality on $(X^{n+1},g_+)$ which gives us some evidence of Heintze-Karcher type inequalities of fractional Q-curvature.

Denote
\[ X_r :=\{ p\in X |r(p)>r\}, \quad \partial X_r :=\{p\in X |r(p)=r \}.\]
By direct calculations, we have
\begin{proposition}\label{mth2}
Suppose $(X^{n+1},g_+)(n\geq 5)$ is a conformally compact Einstein manifold with conformal infinity $(M,[\hat{g}])$. Take $\hat{g}\in [\hat{g}]$ such that $E_{\hat{g}}\neq 0$. Let $r$ be the geodesic normal defining function in terms of $\hat{g}$. 
Then there exists $r_0 $ small enough such that for all $0< r<r_0<\epsilon$, 
\begin{equation}\label{mhk2}
\int_{\partial X_r}\frac{V}{H_r}dS \geq \frac{n+1}{n}\int_{X_r}VdV_{g_+},
\end{equation}
where $V$ satisfies (\ref{leeV}) and $H_r$ is the mean curvature on $\partial X_r$ with respect to $g_+$. 
\end{proposition}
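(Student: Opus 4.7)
The plan is to verify the inequality (\ref{mhk2}) by a direct asymptotic expansion of both sides in powers of $r$ and to isolate the leading nonvanishing term of $\mathrm{LHS}-\mathrm{RHS}$. Since $g_+$ is asymptotically hyperbolic and (\ref{mhk2}) is an equality in the model $\mathbb{H}^{n+1}$, I expect the first two divergent orders to match on both sides and the first genuine contribution to appear at an order controlled by the Einstein defect $E_{\hat{g}}$.

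First, using the normal form (\ref{met}) with $g_r=\hat{g}+r^2 g_2+r^4 g_4+O(r^5)$ together with $\mathrm{tr}_{\hat{g}}g_2=-\hat{J}$ and $\mathrm{tr}_{\hat{g}}g_4=\tfrac{1}{4}|\hat{A}|^2_{\hat{g}}$ (using that the Bach tensor is trace-free), I would compute
\[
\sqrt{\det g_r/\det\hat{g}}=1-\tfrac{1}{2}\hat{J}\,r^2+\tfrac{1}{8}\bigl(\hat{J}^2-|\hat{A}|^2_{\hat{g}}\bigr)r^4+O(r^6).
\]
Combining $H_{\bar{g}}=-\partial_r\log\sqrt{\det g_r}$ with the conformal change $g_+=r^{-2}\bar{g}$ then yields
\[
H_r=n+\hat{J}\,r^2+\tfrac{1}{2}|\hat{A}|^2_{\hat{g}}\,r^4+O(r^6),
\]
and substituting an even expansion $V=r^{-1}\bigl(1+u_2 r^2+u_4 r^4+O(r^6)\bigr)$ into $-\Delta_+V+(n+1)V=0$ recovers $u_2=\hat{J}/(2n)$ and determines $u_4$ explicitly in terms of $\hat{J}^2$, $|\hat{A}|^2_{\hat{g}}$ and $\Delta_{\hat{g}}\hat{J}$.

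Next, using the coarea identities $dV_{g_+}=r^{-(n+1)}\sqrt{\det g_r}\,dr\,dV_{\hat{g}}$ and $dS_{g_+}\big|_{\partial X_r}=r^{-n}\sqrt{\det g_r}\,dV_{\hat{g}}$, and splitting $X_r$ into the collar $\{0<r<r_0\}$ and its compact complement (which contributes only an $O(1)$ error), both sides of (\ref{mhk2}) become $\int_M\sum_k a_k(y)\,r^{-(n+1-2k)}\,dV_{\hat{g}}$ modulo a bounded term. A short computation shows that the $r^{-(n+1)}$ coefficients equal $\mathrm{Vol}(M,\hat{g})/n$ on both sides, and the $r^{-(n-1)}$ coefficients both reduce to $-\tfrac{n+1}{2n^2}\int_M\hat{J}\,dV_{\hat{g}}$, so the two leading divergent orders cancel in $\mathrm{LHS}-\mathrm{RHS}$.

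The crux is the coefficient of $r^{-(n-3)}$. Substituting the $r^4$ corrections for $\sqrt{\det g_r}$, $H_r^{-1}$ and $V$, and integrating by parts on the closed manifold $M$ to discard divergence-type and $\Delta_{\hat{g}}\hat{J}$ contributions, this coefficient should collapse to a positive multiple of $\int_M\bigl|\hat{A}-\tfrac{\hat{J}}{n}\hat{g}\bigr|^2_{\hat{g}}\,dV_{\hat{g}}=(n-2)^{-2}\int_M|E_{\hat{g}}|^2\,dV_{\hat{g}}$. Under $E_{\hat{g}}\not\equiv 0$ this integral is strictly positive, and because $n\geq 5$ the factor $r^{-(n-3)}$ genuinely diverges as $r\to 0$, dominating the $O(1)$ contribution from the compact complement for all small enough $r$. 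The main obstacle is precisely this last step: executing the $r^4$ bookkeeping so that, after the cancellations forced by asymptotic-hyperbolic rigidity, the residual coefficient assembles manifestly into $|E_{\hat{g}}|^2$ rather than an indefinite combination of $|\hat{A}|^2$, $\hat{J}^2$ and derivatives of $\hat{J}$.
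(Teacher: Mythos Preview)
Your proposal is correct and follows essentially the same route as the paper: expand $H_r$, $\sqrt{\det g_r/\det\hat g}$, and $V$ through order $r^4$, observe that the $r^{-(n+1)}$ and $r^{-(n-1)}$ contributions to the two sides agree, and show that the $r^{-(n-3)}$ discrepancy is a positive multiple of $\int_M |E_{\hat g}|^2\,dS_{\hat g}$, which for $n\geq 5$ dominates the bounded contribution from the compact core. The paper carries out the $r^4$ bookkeeping you flag as the ``main obstacle'' and obtains the explicit ratio $\mathrm{LHS}=\tfrac{n+1}{n}\mathrm{RHS}\cdot\bigl(1+\beta r^4+O(r^5)\bigr)$ with $\beta=\bigl(n(n-2)^3\,\mathrm{Vol}(M,\hat g)\bigr)^{-1}\int_M|\hat E|^2_{\hat g}\,dS_{\hat g}$; note that the correct remainder is $O(r^5)$ rather than $O(r^6)$, since for odd $n$ the global term $g_n$ enters at odd order.
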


\begin{proof}
At first for $r>0$ sufficiently small, the level sets $\partial X_r$ are smooth compact embedded hypersurfaces. It is straightforward to see from (\ref{met}) that
\begin{equation}\label{mean}
H_r =\frac{1}{2}Tr_{g_r}(-r\partial_r (r^{-2}g_r))=n-\frac{r}{2}Tr_{g_r}(\partial_r g_r)=n+\hat{J}r^2+\frac{1}{2}|\hat{A}|^{2}_{\hat{g}}r^4+O(r^5).
\end{equation}
Now by a direct application of the Taylor formula we have from (\ref{met}) and (\ref{mean}) that
\begin{equation}\label{det}
\sqrt{\frac{\mathrm{det}g_r}{\mathrm{det}\hat{g}}}=1-\frac{\hat{J}}{2}r^2 +\frac{1}{8}(\hat{J}^2 -|\hat{A}|^{2}_{\hat{g}})r^4 +O(r^{5}).
\end{equation}
Recall $V$ satisfies (\ref{leeV}) and near the boundary 
\begin{equation}\label{Veq}
 V=\frac{1}{r}\left(1+\frac{\hat{J}}{2n}r^2 +v_4 r^4 +O(r^5)\right),\quad v_4 =\frac{1}{8n(n-2)}\left(\hat{\Delta}\hat{J}-\hat{J}^2 +n|\hat{A}|^{2}_{\hat{g}}\right).
\end{equation}
Then
\begin{equation}\label{HV}
\frac{V}{H_r}=\frac{r^{-1}}{n}\left(1-\frac{1}{2n}\hat{J}r^2 +\alpha r^4 +O(r^5)\right),
\end{equation}
where
\[\alpha=v_4-\frac{1}{2n}|\hat{A}|^{2}_{\hat{g}} +\frac{1}{2n^2}\hat{J}^2.\]
Thus the integral of $\frac{V}{H_r}$ on $\partial X_r$ with respect to $g_+$ is
\[
\int_{\partial X_r}\frac{V}{H_r} dS_{g_+}=r^{-n}\int_{M}\frac{V}{H_r}\sqrt{\frac{\mathrm{det}g_r}{\mathrm{det}\hat{g}}}dS_{\hat{g}}.
\]
(\ref{det}) and (\ref{HV}) imply that
\begin{equation}\label{uh}
\int_{\partial X_r}\frac{V}{H_r} dS_{g_+}=\frac{r^{-n-1}}{n}\left(\mathrm{Vol}(M,\hat{g})+\alpha_1 r^2 +\alpha_2 r^4 +O(r^{5})\right),
\end{equation}
where
\[\alpha_1 =-\frac{n+1}{2n}\int_{M}\hat{J}dS_{\hat{g}},\quad \alpha_2 =\int_{M} \left(\alpha +\frac{1}{8}\left(\hat{J}^2 -|\hat{A}|^{2}_{\hat{g}}\right)+\frac{1}{4n}\hat{J}^2\right) dS_{\hat{g}}.\]
For the integral of $V$ over $X_r$ with respect to $g_+$, there exists a constant $C$ independent of $r$ such that 
\[
\int_{X_r}VdV_{g_+}=C+\int_{M}\int_{r}^{r_0}V\sqrt{\frac{\mathrm{det}g_t}{\mathrm{det}\hat{g}}}dtdS_{\hat{g}}.
\]
where $r_0<\epsilon$ is small enough.
Therefore similarly we have
\begin{equation}\label{v}
\int_{X_r}VdV_{g_+}=\frac{r^{-n-1}}{n+1}\left(\mathrm{Vol}(M,\hat{g})+\beta_1 r^2 +\beta_2 r^4 +O(r^{5})\right),
\end{equation}
where
\[
\beta_1 =-\frac{n+1}{2n}\int_{M}\hat{J}dS_{\hat{g}},\quad \beta_2 =\frac{n+1}{n-3}\int_{M}\left(v_4 +\frac{1}{8}(\hat{J}^2 -|\hat{A}|^{2}_{\hat{g}})-\frac{\hat{J}^2}{4n}\right)dS_{\hat{g}}.
\]
According to (\ref{uh}) and (\ref{v}), we finally have
\begin{equation}
\int_{\partial X_r}\frac{V}{H_r} dS_{g_+}=\frac{n+1}{n}\int_{X_r}VdV_{g_+} \cdot\left(1+ \beta r^4+O(r^{5})\right),
\end{equation}
with
\[
\beta =\frac{1}{n(n-2)^3}\frac{1}{\mathrm{Vol}(M,\hat{g})}\int_{M}|\hat{E}|^{2}_{\hat{g}}dS_{\hat{g}}.
\]
Therefore we have the inequality if $r<r_0$ is small enough
\[
\int_{\partial X_r}\frac{V}{H_r} dS_{g_+}\geq \frac{n+1}{n}\int_{X_r}VdV_{g_+}.
\]
\end{proof}

\vspace{0.2in}
\section{Fractional curvature $Q_{2\gamma}$}\label{frac}
In this section, $(X^{n+1}, g_+)$ is a conformally compact Einsterin manifold with conformal infinity $(M,[\hat{g}])$. Let $\bar{g}_s$ be the adapted compactificated metric in terms of $\hat{g}\in [\hat{g}]$. We first collect some properties of $Q_{2\gamma}(0<\gamma<1)$ and give some rigidity results for $(X^{n+1}, g_+)$, then we can proof Theorem \ref{mth3} and Corollary \ref{cor}.

For $\gamma \in (0,1)$, $\bar{g}_s =\rho_{s}^2 g_+$. Then $\rho_s$ satisfies the following equation
\begin{equation}\label{rhos}
\bar{\Delta}_s \rho_s =-s\rho_{s}^{2\gamma-1}T_s, \quad \textrm{with }\quad  T_s =\frac{1-|\bar{\nabla}\rho_s|^{2}_{\bar{g}_s}}{\rho_{s}^{2\gamma}}.
\end{equation}
and the scalar curvature of $\bar{g}_s$ is
\[ \bar{J}_s =\frac{2s-n-1}{2}\frac{1-|\bar{\nabla}\rho_{s}|^{2}_{\bar{g}_s}}{\rho_{s}^2}.\]

\begin{lemma}[Guillarmou-Qing\cite{GQ}]\label{qlem}
Suppose $\mathcal{Y}(M,[\hat{g}])>0$, $\hat{g}$ is a representative such that $R_{\hat{g}}>0$, then for all $\gamma\in (0,1)$, $Q_{2\gamma}^{\hat{g}}>0$.
\end{lemma}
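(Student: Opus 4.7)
The plan is a three-stage reduction to a maximum principle on the adapted compactification.

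First, I would establish existence and positivity of $u_s$. The boundary Yamabe hypothesis $\mathcal{Y}(M,[\hat{g}])>0$ combined with Lee's theorem \cite{Le} gives $\mathrm{Spec}(-\Delta_+)\geq n^2/4$, and since $s(n-s)=\frac{n^2}{4}-\gamma^2<\frac{n^2}{4}$ for $\gamma\in(0,1)$, the Poisson problem (\ref{poss}) is uniquely solvable. A coercivity/energy argument exploiting this strict spectral gap, combined with the weak maximum principle applied to the negative part of $u_s$, then forces $u_s>0$ on $X$, so that $\rho_s=u_s^{1/(n-s)}$ is a genuine defining function and $\bar g_s$ is well defined.

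Second, I would reduce the claim to pointwise positivity of $T_s$ on $M$. Using the expansion $\rho_s=r+\rho_{2\gamma}r^{1+2\gamma}+O(r^{1+\alpha})$ together with $\bar g_s=(\rho_s/r)^2\bar g$ in the geodesic compactification, a short computation gives
\[
T_s|_M \;=\; -4\gamma\,\rho_{2\gamma} \;=\; -\frac{4\gamma}{d_\gamma}\,Q_{2\gamma}^{\hat{g}}.
\]
Since $\Gamma(\gamma)>0$ while $\Gamma(-\gamma)<0$ on $\gamma\in(0,1)$, the constant $d_\gamma=2^{2\gamma}\Gamma(\gamma)/\Gamma(-\gamma)$ is negative; hence proving $Q_{2\gamma}^{\hat g}>0$ is equivalent to proving $T_s|_M>0$.

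Third, I would run a maximum principle to secure $T_s>0$ on all of $\overline X$. Combining $\bar\Delta_s\rho_s=-s\rho_s^{2\gamma-1}T_s$ from (\ref{rhos}) with the CCE trace identity derived from $R_{g_+}=-n(n+1)$ and (\ref{ric}) gives
\[
\bar R_s \;=\; n(2\gamma-1)\,\rho_s^{2\gamma-2}\,T_s.
\]
Applying the Bochner formula to $|\bar\nabla\rho_s|^2=1-\rho_s^{2\gamma}T_s$ and substituting the Einstein condition for $Ric_{\bar g_s}(\bar\nabla\rho_s,\bar\nabla\rho_s)$ would produce an elliptic equation of the form
\[
\bar\Delta_s(\rho_s^{a}T_s)+b^{i}\bar\nabla_i(\rho_s^{a}T_s)+c\,\rho_s^{a}T_s \;=\; F,
\]
for a suitable weight $a$, with $F\geq 0$ when $R_{\hat g}>0$. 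A strong maximum principle in the interior together with a Hopf boundary-point lemma on $M$ would then preclude $T_s$ from attaining a non-positive value anywhere on $\overline X$, yielding $T_s|_M>0$ as required.

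The main obstacle is analytical. Since $\bar g_s$ is only $C^{2\gamma}$-regular at $M$ and $\bar R_s$ generically blows up like $\rho_s^{2\gamma-2}$, neither standard Yamabe theory on $(\overline{X},\bar g_s)$ nor a naive maximum principle for $T_s$ itself applies directly. The key technical step is to choose the weight $a$ so that $\rho_s^{a}T_s$ extends $C^2$ up to $M$ and the resulting zeroth-order coefficient $c$ has the sign required for the maximum principle; it is at this point that the hypothesis $R_{\hat g}>0$ must be leveraged decisively, presumably through the leading-order boundary jet of $\bar g_s$, which carries $\hat J=R_{\hat g}/(2(n-1))$.
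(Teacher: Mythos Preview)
The paper does not prove Lemma~\ref{qlem}; it is quoted from \cite{GQ} and used as a black box. So there is no in-paper argument to compare against. What the paper \emph{does} prove is Lemma~\ref{Tlem}, and its logical flow is exactly the reverse of yours: it takes $Q_{2\gamma}>0$ from \cite{GQ} as input, reads off $T_s|_M=-\tfrac{4\gamma}{d_\gamma}Q_{2\gamma}>0$, and only then runs the maximum principle (plus a continuity-in-$\gamma$ argument for $\gamma>\tfrac12$) to push positivity of $T_s$ into the interior.

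Your third stage attempts to invert that flow, and this is where the gap lies. The Bochner computation you describe is precisely the derivation of equation~(\ref{Teq}), and that interior equation contains no trace of $R_{\hat g}$: the right-hand side is
\[
-2\rho_s^{-2\gamma}\Bigl|\bar\nabla^2\rho_s-\tfrac{\bar\Delta_s\rho_s}{n+1}\bar g_s\Bigr|^2 \;+\; c(\gamma,n)\,T_s^{2}\,\rho_s^{2\gamma-2},
\]
with $c(\gamma,n)=\tfrac{n(n+2\gamma)(2\gamma-1)}{2(n+1)}$. Multiplying $T_s$ by any power $\rho_s^{a}$ will not manufacture a dependence on $\hat J$ inside $X$; the hypothesis $R_{\hat g}>0$ enters \emph{only} through the boundary value $T_s|_M$, which is exactly the quantity you are trying to control. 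Consequently your assertion ``$F\geq 0$ when $R_{\hat g}>0$'' has no support, and for $\gamma<\tfrac12$ the visible forcing term even has the wrong sign. The Hopf lemma at $M$ likewise gives you nothing until you already know the sign of $T_s|_M$. In short, the maximum-principle machinery available on $(\overline X,\bar g_s)$ propagates boundary positivity inward; it does not generate boundary positivity from interior structure.

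The actual argument in \cite{GQ} is of a different nature: it is spectral, locating the first real scattering pole of $S(s)$ via the positivity of the bottom of the spectrum of the boundary conformal Laplacian when $\mathcal{Y}(M,[\hat g])>0$, and deducing that $S(s)1$ does not change sign on $(\tfrac n2,\tfrac n2+1)$. If you want a self-contained route, that is the direction to look; the adapted-compactification PDE for $T_s$ alone will not close the loop.
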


Moreover if $Q_{2\gamma}^{\hat{g}}>0$, we can show that $T_s$ is also positive with $s=\frac{n}{2}+\gamma$. The proof of Lemma \ref{Tlem} is similar to \cite{CC, WZ2} where the authors proved that $\bar{J}_s >0$ for $\gamma>1$ if $\bar{J}_s |_M >0$.
\begin{lemma}\label{Tlem}
For $\gamma\in (0,1)$, suppose $\mathcal{Y}(M,[\hat{g}])>0$, $\hat{g}$ is a representative such that $R_{\hat{g}}>0$, then $T_s$ satisfies the following equation
\begin{equation}\label{Teq}
\begin{cases}
\Delta_s T_s +(2\gamma-1)\rho_{s}^{-1}\langle \bar{\nabla}\rho_s, \bar{\nabla}T_s\rangle_{\bar{g}_s}=-2\rho_{s}^{-2\gamma}\left|\bar{\nabla}^2 \rho_s -\frac{\bar{\Delta}_s\rho_s}{n+1}\bar{g}_s\right|_{\bar{g}_s}^2 + c(\gamma,n)T_{s}^2 \rho_{s}^{2\gamma-2},& \mathrm{in~}X;\\
T_s = -\frac{4\gamma}{d_\gamma}Q_{2\gamma},& \mathrm{on~} M.
\end{cases}
\end{equation}
where
\[ c(\gamma,n)=\frac{n(n+2\gamma)(2\gamma-1)}{2(n+1)}.\]
Moreover $T_s$ is positive in $X$.
\end{lemma}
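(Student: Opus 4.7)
The plan is to split the statement into three parts: (i) derive the interior PDE, (ii) verify the boundary value, and (iii) establish the positivity of $T_s$.

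For (i), the starting point is the Bochner identity applied to $\rho_s$ with respect to $\bar{g}_s$:
\[
\tfrac{1}{2}\bar{\Delta}_s |\bar{\nabla}\rho_s|^{2}_{\bar{g}_s} = |\bar{\nabla}^2 \rho_s|^{2}_{\bar{g}_s} + \langle \bar{\nabla}\rho_s, \bar{\nabla}(\bar{\Delta}_s\rho_s)\rangle_{\bar{g}_s} + \mathrm{Ric}_{\bar{g}_s}(\bar{\nabla}\rho_s, \bar{\nabla}\rho_s).
\]
Because $g_+$ is Einstein, the conformal transformation formula \eqref{ric} applied to $\bar{g}_s=\rho_s^2 g_+$ (together with the substitutions $\bar{\Delta}_s\rho_s=-s\rho_s^{2\gamma-1}T_s$ and $1-|\bar{\nabla}\rho_s|^2=\rho_s^{2\gamma}T_s$) lets me write
\[
\mathrm{Ric}_{\bar{g}_s} = -(n-1)\rho_s^{-1}\bar{\nabla}^2\rho_s - (n-s)\rho_s^{2\gamma-2}T_s\,\bar{g}_s.
\]
After differentiating $\bar{\Delta}_s\rho_s=-s\rho_s^{2\gamma-1}T_s$ and expanding $\bar{\Delta}_s T_s = \bar{\Delta}_s(P\rho_s^{-2\gamma})$ (with $P:=1-|\bar{\nabla}\rho_s|^2$) via the product rule, the identity $(\bar{\nabla}^2\rho_s)(\bar{\nabla}\rho_s,\bar{\nabla}\rho_s) = -\tfrac{1}{2}\langle\bar{\nabla}P,\bar{\nabla}\rho_s\rangle$ allows all cross terms to collect cleanly. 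The trace-free Hessian norm appears by invoking the orthogonal decomposition
\[
|\bar{\nabla}^2\rho_s|^2 = \Bigl|\bar{\nabla}^2\rho_s-\tfrac{\bar{\Delta}_s\rho_s}{n+1}\bar{g}_s\Bigr|^2 + \tfrac{(\bar{\Delta}_s\rho_s)^2}{n+1},
\]
and the remaining $T_s^2$-coefficient should conspire to produce exactly $c(\gamma,n)=\tfrac{n(n+2\gamma)(2\gamma-1)}{2(n+1)}$ after using $(\bar{\Delta}_s\rho_s)^2=s^2\rho_s^{4\gamma-2}T_s^2$ and $s=\tfrac{n}{2}+\gamma$.

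For (ii), I insert the expansion $\rho_s=r(1+\rho_{2\gamma}r^{2\gamma}+O(r^\alpha))$ with $\rho_{2\gamma}=\tfrac{1}{d_\gamma}Q_{2\gamma}$ from Section \ref{ada} into $\bar{g}_s=(\rho_s/r)^2(dr^2+g_r)$ and compute $|\bar{\nabla}\rho_s|^{2}_{\bar{g}_s}=\rho_s^{-2}|d\rho_s|^{2}_{g_+}$. A direct asymptotic calculation yields $|\bar{\nabla}\rho_s|^{2}_{\bar{g}_s}=1+4\gamma\rho_{2\gamma}r^{2\gamma}+o(r^{2\gamma})$, and consequently
\[
T_s\bigl|_{M} = -4\gamma\rho_{2\gamma} = -\tfrac{4\gamma}{d_\gamma}Q_{2\gamma}.
\]

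For (iii), note first that $d_\gamma<0$ for $\gamma\in(0,1)$ (by $\Gamma(-\gamma)=-\Gamma(1-\gamma)/\gamma$), while Lemma \ref{qlem} gives $Q_{2\gamma}>0$, so $T_s|_M>0$. Write the equation as $\mathcal{L}T_s = F(T_s)$ where $\mathcal{L}:=\bar{\Delta}_s+(2\gamma-1)\rho_s^{-1}\langle\bar{\nabla}\rho_s,\bar{\nabla}\cdot\rangle$ is a degenerate elliptic operator (the degeneration occurs at $M$) and $F(T_s) = -2\rho_s^{-2\gamma}|\cdots|^2 + c(\gamma,n)T_s^2\rho_s^{2\gamma-2}$. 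When $\gamma\le 1/2$ one has $c(\gamma,n)\le 0$, so $F\le 0$ pointwise; if $T_s$ were $\le 0$ somewhere, evaluating at an interior minimum forces both a contradiction and equality cases ruled out by the strong maximum principle, giving $T_s>0$. For $\gamma\in(1/2,1)$ the sign of $c(\gamma,n)$ is positive, and this is the main obstacle: the direct maximum principle fails because the zero-order term has the wrong sign. I would follow the strategy in \cite{CC,WZ2}, localizing near the boundary where the asymptotic expansion in Section \ref{ada} already guarantees $T_s>0$ in a collar $\{r<r_0\}$, and then extending this to all of $X$ by a barrier argument adapted to $\mathcal{L}$, using the degeneracy of $\mathcal{L}$ at $M$ together with the quadratic nature of $c(\gamma,n)T_s^2\rho_s^{2\gamma-2}$ to prevent $T_s$ from crossing zero in the interior.
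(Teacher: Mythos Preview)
Your treatment of parts (i) and (ii) matches the paper's proof: the Bochner identity combined with the conformal Ricci formula \eqref{ric} and the trace-free/trace decomposition of $|\bar{\nabla}^2\rho_s|^2$ is exactly how the interior equation is derived, and the boundary value follows from the asymptotic expansion of $\rho_s$ in Section~\ref{ada}.

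The gap is in part (iii) for $\gamma\in(\tfrac12,1)$. Your proposed ``barrier argument'' is too vague, and in fact a direct barrier or maximum-principle argument for a \emph{fixed} $\gamma>\tfrac12$ does not obviously work: at a putative interior negative minimum of $T_s$ the right-hand side contributes $c(\gamma,n)T_s^2\rho_s^{2\gamma-2}>0$, so no contradiction arises. The degeneracy of $\mathcal L$ at $M$ and the quadratic structure of the zero-order term do not by themselves prevent $T_s$ from dipping below zero in the interior once $c(\gamma,n)>0$.

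The paper's key idea, which you are missing, is a \emph{continuity argument in the parameter} $\gamma$. One lets $\mathcal E\subset(0,1)$ be the set of $\gamma$ for which $T_s>0$ in $X$; it contains $(0,\tfrac12]$ by the easy case you already handled, and it is open by continuous dependence of $u_s$ (hence $\rho_s$ and $T_s$) on $s$. For closedness, if $\gamma_0\in(\tfrac12,1)$ is a limit of points in $\mathcal E$, then $T_{s_0}\ge 0$ everywhere. Now the crucial point is that \emph{once $T_{s_0}\ge 0$ is known}, one may write the equation as $\mathcal L T_{s_0} - h\,T_{s_0}\le 0$ with $h:=c(\gamma_0,n)T_{s_0}\rho_{s_0}^{2\gamma_0-2}\ge 0$, and the strong maximum principle then forbids an interior zero unless $T_{s_0}\equiv 0$, contradicting $T_{s_0}|_M>0$. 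Hence $\mathcal E$ is closed, so $\mathcal E=(0,1)$. This continuity-in-$\gamma$ step is what actually replaces the unavailable direct maximum principle when $c(\gamma,n)>0$.
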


\begin{proof} The following computation is done with respect to $\bar{g}_s$, we drop the subscript to simplify the presentation.
First, denote $\Phi$ a symmetric two tensor and
\[ \Phi := Ric_{g_+}+ng_{+}=Ric_{\bar{g}}+(n-1)\rho^{-1}\bar{\nabla}^2 \rho+\rho^{-2}\left(\rho\bar{\Delta}\rho+n(1-|\bar{\nabla}\rho|_{\bar{g}}^2 \right)\bar{g}. \]
Recall
\[ T=\frac{1-|\bar{\nabla}\rho|^2}{\rho^{2\gamma}},\]
then
\[
\begin{aligned}
\bar{\Delta}T
&=\rho^{-2\gamma}\bar{\Delta}(1-|\bar{\nabla}\rho|^2)+2\langle\bar{\nabla}(\rho^{-2\gamma}),\bar{\nabla}(1-|\bar{\nabla}\rho|^2)\rangle+(1-|\bar{\nabla}\rho|^2)\bar{\Delta}(\rho^{-2\gamma})\\
&=-\rho^{-2\gamma}\bar{\Delta}(|\bar{\nabla}\rho|^2)-4\gamma\rho^{-2\gamma-1}\langle\bar{\nabla}\rho, \bar{\nabla}(T\rho^{2\gamma})\rangle+T\rho^{2\gamma}\bar{\Delta}(\rho^{-2\gamma})\\
&=-\rho^{-2\gamma}\bar{\Delta}(|\bar{\nabla}\rho|^2)-4\gamma\rho^{-1}\langle\bar{\nabla}\rho,\bar{\nabla}T\rangle-2\gamma(2\gamma-1)T\rho^{-2}|\bar{\nabla}\rho|^2 -2\gamma T\rho^{-1}\bar{\Delta}\rho.
\end{aligned}
\]
By the Bochner formula and equation (\ref{rhos}), we have
\[
\begin{aligned}
\bar{\Delta}(|\bar{\nabla}\rho|^2)
&=2|\bar{\nabla}^2 \rho|^2 +2\langle\bar{\nabla}\bar{\Delta}\rho,\bar{\nabla}\rho\rangle+2Ric(\bar{\nabla}\rho,\bar{\nabla}\rho)\\
&=2|\bar{\nabla}^2 \rho|^2 -(n+2\gamma)\langle\bar{\nabla} (T\rho^{2\gamma-1}),\bar{\nabla}\rho\rangle+2Ric(\bar{\nabla}\rho,\bar{\nabla}\rho)\\
\end{aligned}
\]
and
\[
\langle\bar{\nabla} T,\bar{\nabla}\rho\rangle=-2\rho^{-2\gamma}\bar{\nabla}^2 \rho(\bar{\nabla}\rho,\bar{\nabla}\rho)-2\gamma T\rho^{-1}|\bar{\nabla}\rho|^2.
\]
Thus
\[
\begin{aligned}
\bar{\Delta}T+(2\gamma -1)\rho^{-1}\langle \bar{\nabla}\rho, \bar{\nabla}T\rangle_{\bar{g}}=
&-2\rho^{-2\gamma}\left|\bar{\nabla}^2 \rho -\frac{\bar{\Delta}\rho}{n+1}\bar{g}\right|^2 +\frac{n(n+2\gamma)(2\gamma-1)}{2(n+1)}T^2 \rho^{2\gamma-2}\\
&-2\rho^{-2\gamma}\Phi(\bar{\nabla}\rho,\bar{\nabla}\rho).
\end{aligned}
\]
Since $g_+$ is Einstein, we have
\[ \Phi(\bar{\nabla}\rho,\bar{\nabla}\rho)=0.\]
Thus we prove equation (\ref{Teq}).

According to Lemma \ref{qlem}, $T_s$ is positive on the boundary for all $\gamma\in (0,1)$. If $\gamma \leq \frac{1}{2}$, $T_s >0$ all over $\overline{X}$ by maximun principle. Let $\mathcal{E}\subset (0,1)$ be the subset such that for each $\gamma\in \mathcal{E} $ such that $T_s >0$ in $X$. It is obvious that $\mathcal{E} $ is nonempty and open. If $\gamma_0 \in (\frac{1}{2},1)$ is a limit point of $\mathcal{E} $, then $T_{s_0} \geq 0$ with $s_0=\frac{n}{2}+\gamma_0$. Recall $T_{s_0}$ satisfies equation (\ref{Teq}), if $\gamma_{0} \notin \mathcal{E} $, then there is an interior point $p\in X$ such that $T_{s_0}(p)=0$, which contradicts with the strong maximum principle. Hence $\gamma_0 \in \mathcal{E} $ and $\mathcal{E} $ is closed. Therefore for all $\gamma\in (0,1)$, $T_s $ is positive in $X$.  
\end{proof}

Besides, we also recall the uniqueness theorem of conformally compact Einstein manifolds from \cite{CWZ, WZ1}, which is first proved in \cite{CLW}.
\begin{lemma}\label{rig2}
Suppose $(X,g_+)$ is conforamlly compact Einstein manifold, $\bar{g}_s$ is the adapted compatification metric at $\gamma=\frac{1}{2}$. If $Ric_{\bar{g}_s}=0$ and $\bar{H}_{\bar{g}_s}$ is a constant, then $(\overline{X},\bar{g}_s)$ is isometric to the flat ball $(\mathbb{B}^n ,g_{\mathbb{R}})$ and $(X,g_+)$ is isometric to the hyperbolic space $\mathbb{H}^n$.
\end{lemma}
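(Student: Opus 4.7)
The strategy exploits three fortuitous features at the critical exponent $\gamma=\tfrac12$: the Einstein equation combined with $Ric_{\bar{g}_s}=0$ forces the trace-free Hessian of $\rho_s$ to vanish; both coefficients $2\gamma-1$ and $c(\gamma,n)$ in the $T_s$-equation \eqref{Teq} vanish at $\gamma=\tfrac12$; and constant boundary mean curvature supplies a constant Dirichlet datum for $T_s$ via Remark \ref{hq}.

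First I would substitute $Ric_{g_+}=-ng_+$ into the conformal change identity \eqref{ric} applied to $\bar{g}_s=\rho_s^2 g_+$. Imposing $Ric_{\bar{g}_s}=0$ and taking a trace to eliminate the scalar combination $\rho_s\bar{\Delta}_s\rho_s+n(1-|\bar{\nabla}\rho_s|^2)$ produces the pointwise Hessian identity
\begin{equation*}
\bar{\nabla}^2\rho_s=\frac{\bar{\Delta}_s\rho_s}{n+1}\bar{g}_s\qquad\text{on }\overline{X},
\end{equation*}
so the trace-free part of $\bar{\nabla}^2\rho_s$ vanishes. Plugging this into \eqref{Teq} at $\gamma=\tfrac12$, where $2\gamma-1=0$ and $c(\tfrac12,n)=0$, the $T_s$-equation collapses to $\bar{\Delta}_s T_s=0$ on $X$. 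By Lemma \ref{Tlem} and Remark \ref{hq}, $T_s|_M$ is a constant multiple of $\bar{H}$, hence constant by hypothesis, so the maximum principle on the compact manifold $(\overline{X},\bar{g}_s)$ forces $T_s\equiv c_0$ for some constant $c_0$.

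Combining the constancy of $T_s$ with the Hessian identity and $\bar{\Delta}_s\rho_s=-sT_s=-\tfrac{n+1}{2}c_0$ yields the rigidity identity
\begin{equation*}
\bar{\nabla}^2\rho_s=-\frac{c_0}{2}\bar{g}_s\qquad\text{on }\overline{X}.
\end{equation*}
Since $\rho_s>0$ in $X$ and $\rho_s=0$ on $M$, $\rho_s$ attains an interior maximum at some point $p$, where $\bar{\nabla}\rho_s(p)=0$ and hence $c_0=1/\rho_s(p)>0$. Along geodesics emanating from $p$ with arclength parameter $t$, the Hessian identity integrates to $\rho_s(t)=\tfrac{1}{c_0}-\tfrac{c_0}{4}t^2$; its tangential components force the warping factor in $\bar{g}_s=dt^2+f(t)^2 g_N$ to satisfy $f(t)=t$, and smoothness at $p$ forces the cross-section $(N,g_N)$ to be the round unit sphere. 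Thus $(\overline{X},\bar{g}_s)$ is isometric to the closed flat ball of radius $2/c_0$, and then $g_+=\rho_s^{-2}\bar{g}_s$ is recognised as the Poincar\'e ball model of $\mathbb{H}^{n+1}$.

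I expect the main technical obstacle to be this final rigidity step, which is really the content of Tashiro's theorem on functions with Hessian proportional to the metric: one must verify that the critical set of $\rho_s$ is the single point $p$, and that the warped product extends smoothly through the pole with a round sphere fibre. Once this is granted, the identification with the hyperbolic ball is dictated by the explicit ODE, and the whole argument reduces to combining the two vanishing coefficients at $\gamma=\tfrac12$ with a Hopf-type maximum principle.
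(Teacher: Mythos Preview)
The paper does not prove Lemma~\ref{rig2}: it is stated as a quotation of the rigidity theorem from \cite{CWZ,WZ1}, first proved in \cite{CLW}, and then used as a black box in the proof of Corollary~\ref{cor}. Your proposal, by contrast, supplies an actual argument, and it is correct.

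Your route is genuinely different from the cited references. In \cite{CLW} the rigidity is obtained via the Escobar--Yamabe problem and an Obata-type theorem for manifolds with boundary; \cite{CWZ} goes through eigenvalue estimates for the Dirac operator. Your argument is tailored to the adapted compactification at $\gamma=\tfrac12$ and is more elementary: from the conformal Ricci identity \eqref{ric} together with $Ric_{\bar g_s}=0$ you get the vanishing of the trace-free Hessian of $\rho_s$; then the two vanishing coefficients $2\gamma-1=0$ and $c(\tfrac12,n)=0$ reduce \eqref{Teq} to $\bar\Delta_s T_s=0$, and the constant Dirichlet datum (via Remark~\ref{hq}) forces $T_s\equiv c_0$. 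This yields $\bar\nabla^2\rho_s=-\tfrac{c_0}{2}\bar g_s$, and the remaining step is exactly the Tashiro/Reilly rigidity for functions with constant-multiple Hessian on a compact manifold with boundary, which you sketch correctly. What your approach buys is a short, self-contained proof that never leaves the analytic framework already set up in Section~\ref{frac}; what the cited approaches buy is greater generality (they do not rely on the special cancellations at $\gamma=\tfrac12$).

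Two minor remarks. First, when you invoke Lemma~\ref{Tlem} you only need the PDE \eqref{Teq} and the boundary identity $T_s|_M=-\tfrac{4\gamma}{d_\gamma}Q_{2\gamma}$; the positivity conclusion of that lemma (and hence its Yamabe hypothesis) is not used in your argument. Second, in your final paragraph the uniqueness of the critical point and the identification of the link as the round sphere are indeed the only points requiring care, but both follow from the strict concavity of $\rho_s$ along geodesics (since $\rho_s''=-c_0/2<0$) together with the standard smoothness criterion for polar-type metrics $dt^2+t^2 g_N$ at $t=0$; citing Tashiro or Reilly \cite{Re} here is entirely appropriate.
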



Now we are ready to prove Theorem \ref{mth3} and Corollary \ref{cor}.
\begin{proof}[Proof of Theorem \ref{mth3}]
First of all, recall $\gamma\in (0,1)$, $s=\frac{n}{2}+\gamma$, $\bar{g}_s =\rho_s g_+$ and
\[T_s=\frac{1-|\bar{\nabla}\rho_s|_{\bar{g}_s}^2}{\rho_{s}^{2\gamma}}, \quad T_s|_M = -\frac{4\gamma}{d_\gamma}Q_{2\gamma}.\]
Since $R_{\hat{g}}>0$, we know $Q_{2\gamma} >0$ by Lemma \ref{qlem} or Guillarmou-Qing\cite{GQ}, then $T_s >0$ in $X$ by Lemma \ref{Tlem}. Set
 $$k:=\frac{1-\gamma}{\gamma}.$$
Integrating by part, we have
\begin{multline}\label{green}
\int_{X} \rho_s \left(\bar{\Delta}_s T_{s}^{-k}+(2\gamma-1)\rho_{s}^{-1}\langle\bar{\nabla}\rho_s,\bar{\nabla}T_{s}^{-k}\rangle \right) dV_{\bar{g}_s}- (2-2\gamma)\int_{X}T_{s}^{-k}\bar{\Delta}_s\rho_s dV_{\bar{g}_s}\\
=\int_{M} \rho_s \frac{\partial T_{s}^{-k}}{\partial \nu}-(2-2\gamma)T_{s}^{-k}\frac{\partial\rho_s}{\partial\nu}dS_{\hat{g}},
\end{multline}
where $\nu$ is unit outerward normal vector in terms of $\bar{g}_s$.
By equation (\ref{rhos}), we have
\begin{equation}\label{r-1}
T_{s}^{-k}\bar{\Delta}_s \rho_s =-(\frac{n}{2}+\gamma)\rho_{s}^{2\gamma-1}T_{s}^{1-k}.
\end{equation}
Equation (\ref{Teq}) implies that
\begin{multline}\label{t-1}
\bar{\Delta}_s T_{s}^{-k}+(2\gamma-1)\rho_{s}^{-1}\langle\bar{\nabla}\rho_s,\bar{\nabla}T_{s}^{-k}\rangle
=\\ 2k\rho_{s}^{-2\gamma}T_{s}^{-k-1}\left|\bar{\nabla}^2 \rho_s -\frac{\bar{\Delta}_s\rho_s}{n+1}\bar{g}_s\right|_{\bar{g}_s}^2 +k(k+1)T^{-k-2}_{s}|\bar{\nabla}T_s|^{2}_{\bar{g}_s}- c(\gamma,n)k \rho_{s}^{2\gamma-2}T_{s}^{1-k}.
\end{multline}
According to (\ref{r-1}) and (\ref{t-1}), the left side of equation (\ref{green}) becomes
\begin{equation}
\begin{aligned}
L.H.S=&\int_{X}2k\rho_{s}^{1-2\gamma}T_{s}^{-k-1}\left|\bar{\nabla}^2 \rho_s -\frac{\bar{\Delta}_s\rho_s}{n+1}\bar{g}_s\right|_{\bar{g}_s}^2 +k(k+1)\rho_s T^{-k-2}_{s}|\bar{\nabla}T_s|^{2}_{\bar{g}_s} dV_{\bar{g}_s}\\
&+ \left[(1-\gamma)(n+2\gamma)-c(\gamma,n)k\right]\int_{X} \rho_{s}^{2\gamma-1}T_{s}^{1-k} dV_{\bar{g}_s}
\end{aligned}
\end{equation}
Since $\rho_s |_{M}=0$, $\frac{\partial\rho_s}{\partial\nu}=-1$ and $T_{s}|_M =-\frac{4\gamma}{d_{\gamma}}Q_{2\gamma}$, the right side of equation (\ref{green}) becomes
\[
R.H.S=(2-2\gamma)\left(-\frac{4\gamma}{d_{\gamma}}\right)^{-k}\int_{M} Q_{2\gamma}^{-k}dS_{\hat{g}}.
\]
Therefore we obtain the inequality with $k=\frac{1-\gamma}{\gamma}$
\begin{equation}\label{trho}
\int_{M} Q_{2\gamma}^{-k}dS_{\hat{g}}\geq C(n,\gamma)\int_{X} \rho_{s}^{2\gamma-1}T_{s}^{1-k} dV_{\bar{g}_s},
\end{equation}
where 
$$C(n,\gamma)=\frac{(n+2\gamma)^2}{4\gamma(n+1)}\left(-\frac{4\gamma}{d_{\gamma}}\right)^{k},\quad d_{\gamma}=2^{2\gamma}\frac{\Gamma(\gamma)}{\Gamma(-\gamma)}<0.$$
\end{proof}

It is easy to see that the equality in (\ref{trho}) does not hold if $\gamma\neq \frac{1}{2}$.

\begin{proof}[Proof of Corollary \ref{cor}]
When $\gamma=\frac{1}{2}$, $s=\frac{n+1}{2}$, the ineuqality (\ref{trho}) becomes
\begin{equation}\label{class}
\int_{M}\frac{1}{\bar{H}}dS_{\hat{g}}\geq \frac{n+1}{n}\mathrm{Vol}(X,\bar{g}_s),
\end{equation}
which is the classical Heintze-Karcher inequality since $Q_1 =\frac{\bar{H}}{n}$ by Remark \ref{hq}.
If the equality in (\ref{class}) holds, we immediately have $T_s$ is constant and
\[ \bar{\nabla}^2 \rho_s -\frac{\bar{\Delta}_s\rho_s}{n+1}\bar{g}_s =0.\]
Thus the mean curvature on the boundary $\bar{H}$ is also constant. Moreover by conformal transformation in (\ref {ric}) and equation (\ref{rhos}) at $s=\frac{n+1}{2}$, we get
\[
Ric_{\bar{g}_s}=-(n-1)\rho_{s}^{-1}\left( \bar{\nabla}^2 \rho_s -\frac{\bar{\Delta}_s \rho_s}{n+1}\bar{g}_s \right)=0.
\]
Therefore according to Lemma \ref{rig2}, $(\overline{X},\bar{g})$ is isometric to an Euclidean ball and $(X,g_+)$ is isometric to Hyperbolic space.
\end{proof}

\vspace{0.2in}
\section{Scalar curvature $\hat{J}$}\label{scal}
Suppose $(X^{n+1}, g_+)$ is a conformally compact Einsterin manifold with conformal infinity $(M,[\hat{g}])$. Let $\bar{g}_L =\rho_{L}g_+$ be the adapted compactificated metric in terms of $\hat{g}\in [\hat{g}]$. Then in $\overline{X}$,
\begin{equation}\label{leq}
\bar{\Delta}_L \rho_L =-2\rho_{L}\bar{J}_L,\quad \textrm{with}\quad \bar{J}_L =\frac{n+1}{2}\frac{1-|\bar{\nabla}\rho_L|^{2}_{\bar{g}_L}}{\rho_{L}^2}.
\end{equation}
Moreover $\bar{J}_L >0$ if $\bar{J}_L |_{M}>0$, which is first proved in \cite{Le}.
\begin{lemma}\label{Jlem}
For $s=n+1$, suppose that $R_{\hat{g}}>0$ in terms of the representative $\hat{g}\in [\hat{g}]$, then $\bar{J}_L$ satisfies the following equation
\begin{equation}\label{jeq}
\begin{cases}
\Delta_L \bar{J}_L -(n-1)\rho_{L}^{-1}\langle \bar{\nabla}\rho_L,\bar{\nabla} \bar{J}_L\rangle_{\bar{g}_L}=-(n+1)\rho_{L}^{-2}\left|\bar{\nabla}^2 \rho_L -\frac{\bar{\Delta}_L \rho_L}{n+1}\bar{g}_L\right|_{\bar{g}_L}^2 ,& \mathrm{in~}X;\\
\bar{J}_L = \frac{n+1}{n}\hat{J},& \mathrm{on~} M.
\end{cases}
\end{equation}
Moreover $\bar{J}_L $ is positive all over $\overline{X}$. 
\end{lemma}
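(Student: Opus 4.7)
The plan is to parallel the proof of Lemma~\ref{Tlem}: derive the PDE (\ref{jeq}) by a direct Bochner computation, extract the boundary value from the asymptotic expansion of the Lee potential $V$, and then conclude positivity via a strong minimum principle argument. The key algebraic feature specific to the Lee normalization is that a residual $\bar{J}_L^2$ term cancels exactly, which is why (\ref{jeq}) carries no zero-order coefficient and no conformally weighted quadratic term as in Lemma~\ref{Tlem}.

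For the PDE, I would introduce $T := \tfrac{2}{n+1}\bar{J}_L = \rho_L^{-2}(1-|\bar{\nabla}\rho_L|^2_{\bar{g}_L})$, so that (\ref{leq}) reads $\bar{\Delta}_L\rho_L = -(n+1)\rho_L T$. Applying $\bar{\Delta}_L$ to the relation $\rho_L^2 T = 1 - |\bar{\nabla}\rho_L|^2_{\bar{g}_L}$ and expanding via Bochner, using the auxiliary identity
\[
\bar{\nabla}^2\rho_L(\bar{\nabla}\rho_L,\bar{\nabla}\rho_L) = -\rho_L T|\bar{\nabla}\rho_L|^2_{\bar{g}_L} - \tfrac{1}{2}\rho_L^2\langle\bar{\nabla}T,\bar{\nabla}\rho_L\rangle_{\bar{g}_L}
\]
(obtained by differentiating the same relation once), and eliminating $\mathrm{Ric}_{\bar{g}_L}(\bar{\nabla}\rho_L,\bar{\nabla}\rho_L)$ through (\ref{ric}) with $\mathrm{Ric}_{g_+}=-ng_+$, should yield the intermediate identity
\[
\bar{\Delta}_L T - (n-1)\rho_L^{-1}\langle\bar{\nabla}\rho_L,\bar{\nabla}T\rangle_{\bar{g}_L} = -2\rho_L^{-2}|\bar{\nabla}^2\rho_L|^2_{\bar{g}_L} + 2(n+1)T^2.
\]
Splitting $|\bar{\nabla}^2\rho_L|^2 = |\bar{\nabla}^2\rho_L - \tfrac{\bar{\Delta}_L\rho_L}{n+1}\bar{g}_L|^2 + \tfrac{(\bar{\Delta}_L\rho_L)^2}{n+1}$ and using $(\bar{\Delta}_L\rho_L)^2/(n+1) = (n+1)\rho_L^2 T^2$, the $2(n+1)T^2$ contribution cancels, and scaling by $\tfrac{n+1}{2}$ produces (\ref{jeq}).

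For the boundary value, substituting (\ref{Uasy}) into $\rho_L = V^{-1}$ gives $\rho_L = r - \tfrac{\hat{J}}{2n}r^3 + o(r^3)$; writing $\bar{g}_L = (\rho_L/r)^2\bar{g}$ with $\bar{g}$ in the normal form (\ref{met}) and computing $\bar{g}_L^{rr}(\partial_r\rho_L)^2$ through order $r^2$ gives $1-|\bar{\nabla}\rho_L|^2_{\bar{g}_L} = \tfrac{2\hat{J}}{n}r^2 + o(r^2)$ while $\rho_L^2 = r^2 + o(r^2)$, hence $\bar{J}_L|_M = \tfrac{n+1}{n}\hat{J}$.

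Positivity then follows from the strong minimum principle: the right-hand side of (\ref{jeq}) is non-positive, so $\bar{J}_L$ is a supersolution of the linear elliptic operator $\bar{\Delta}_L - (n-1)\rho_L^{-1}\langle\bar{\nabla}\rho_L,\bar{\nabla}\cdot\rangle_{\bar{g}_L}$, whose coefficients are smooth in the interior $X$. Since $R_{\hat{g}}>0$ yields $\bar{J}_L|_M > 0$, an interior infimum would force $\bar{J}_L$ to be constant in a small ball bounded away from $M$ and hence, by connectedness, constant on $\bar{X}$ equal to the positive boundary value; thus $\bar{J}_L > 0$ everywhere. The principal obstacle is the bookkeeping for the cancellation of the $T^2$ terms in the derivation and, relatedly, ensuring the unexpected coefficient $-(n-1)$ (rather than the $(2\gamma-1)|_{\gamma=1}=+1$ one would naively extrapolate from Lemma~\ref{Tlem}) emerges correctly from combining the auxiliary Hessian identity with the Ricci substitution.
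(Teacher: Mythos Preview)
Your proposal is correct and follows essentially the same route as the paper: a Bochner computation combined with the conformal Ricci identity~(\ref{ric}) and the Einstein condition to obtain~(\ref{jeq}), followed by the maximum principle for positivity. Your intermediate identity (with the full Hessian norm and the $2(n+1)T^2$ term) is just the paper's final formula before the trace/traceless splitting, and your explicit boundary expansion fills in a step the paper leaves implicit; the closing worry about the coefficient $-(n-1)$ is a non-issue once you note that the Lee case is not literally $\gamma=1$ in the framework of Lemma~\ref{Tlem} (there $T_s$ carries a $\rho^{-2\gamma}$ weight, not $\rho^{-2}$), so no direct extrapolation is expected.
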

\begin{proof}
Similarly the following computation is done with respect to $\bar{g}_L$, we drop the subscript to simplify the presentation. Denote $\Psi$ a symmetric two tensor and 
\[
\Psi:=Ric_{g_+}+ng_{+}=Ric_{\bar{g}}+(n-1)\rho^{-1}\bar{\nabla}^2 \rho+\rho^{-2}\left(\rho\bar{\Delta}\rho+n(1-|\bar{\nabla}\rho|_{\bar{g}}^2) \right)\bar{g}.
\]
Recall
\[ \bar{J}=\frac{n+1}{2}\frac{1-|\bar{\nabla}\rho|^2}{\rho^{2}},\]
then
\[
\begin{aligned}
\bar{\Delta}\bar{J}
&=\frac{n+1}{2}\left(\rho^{-2}\bar{\Delta}(1-|\bar{\nabla}\rho|^2)+2\langle\bar{\nabla}(\rho^{-2}),\bar{\nabla}(1-|\bar{\nabla}\rho|^2)\rangle+(1-|\bar{\nabla}\rho|^2)\bar{\Delta}(\rho^{-2})\right)\\
&=-\frac{n+1}{2}\rho^{-2}\bar{\Delta}(|\bar{\nabla}\rho|^2)-4\rho^{-3}\langle\bar{\nabla}\rho, \bar{\nabla}(\bar{J}\rho^{2})\rangle+\bar{J}\rho^{2}\bar{\Delta}(\rho^{-2})\\
&=-\frac{n+1}{2}\rho^{-2}\bar{\Delta}(|\bar{\nabla}\rho|^2)-4\rho^{-1}\langle\bar{\nabla}\rho, \bar{\nabla}\bar{J})\rangle-2\bar{J}\rho^{-1}\bar{\Delta}\rho-2\bar{J}\rho^{-2}|\bar{\nabla}\rho|^2.
\end{aligned}
\]
By the Bochner formula and equation (\ref{leq}), we have
\[
\begin{aligned}
\bar{\Delta}(|\bar{\nabla}\rho|^2)
&=2|\bar{\nabla}^2 \rho|^2 +2\langle\bar{\nabla}\bar{\Delta}\rho,\bar{\nabla}\rho\rangle+2Ric(\bar{\nabla}\rho,\bar{\nabla}\rho)\\
&=2|\bar{\nabla}^2 \rho|^2 -4\langle\bar{\nabla} (J\rho),\bar{\nabla}\rho\rangle+2Ric(\bar{\nabla}\rho,\bar{\nabla}\rho),
\end{aligned}
\]
and
\[
\langle\bar{\nabla} \bar{J},\bar{\nabla}\rho\rangle=-(n+1)\rho^{-2}\bar{\nabla}^2 \rho (\bar{\nabla}\rho,\bar{\nabla}\rho)-2\bar{J}\rho^{-1}|\bar{\nabla}\rho|^2.
\]
Thus
\[
\bar{\Delta}\bar{J}-(n-1)\rho^{-1}\langle \bar{\nabla}\rho, \bar{\nabla}T\rangle_{\bar{g}}=
-(n+1)\rho^{-2}\left|\bar{\nabla}^2 \rho -\frac{\bar{\Delta}\rho}{n+1}\bar{g}\right|^2 
-(n+1)\rho^{-2}\Psi(\bar{\nabla}\rho,\bar{\nabla}\rho).
\]
Since $g_+$ is Einstein, we have
\[ \Psi(\bar{\nabla}\rho,\bar{\nabla}\rho)=0.\]
Thus equation (\ref{jeq}) holds. Since $\hat{J}>0$, we have $\bar{J}_L >0$ by maximum principle.  
\end{proof}

Now we are ready to prove Theorem \ref{mth4}.
\begin{proof}[Proof of Theorem \ref{mth4}]
At first, recall $\bar{g}_L =\rho_L g_+$ and
\[\bar{J}_L=\frac{n+1}{2}\frac{1-|\bar{\nabla}\rho_{L}|^2}{\rho_{L}^{2}}, \quad \bar{J}_L|_M = \frac{n+1}{n}\hat{J}.\]
According to Lemma (\ref{Jlem}), we know that $\bar{J}_L >0$ in $X$.
By direct computation, we have
\begin{multline}\label{eq41}
\int_{X}~ \rho_L \left(\bar{\Delta}_L \bar{J}_{L}^{-1}-(n-1)\rho_{L}^{-1}\langle \bar{\nabla}\rho_L,\bar{J}_{L}^{-1}\rangle_{\bar{g}_L}\right)-n\bar{J}_{L}^{-1}\bar{\Delta}_L \rho_L ~dV_{\bar{g}_L}
\\=\int_{M}\left(\rho_L \frac{\partial \bar{J}_{L}^{-1}}{\partial\nu}-n\bar{J}_{L}^{-1}\frac{\partial\rho_L}{\partial\nu}\right)dS_{\hat{g}}=\frac{n^2}{n+1}\int_{M}\frac{1}{\hat{J}}dS_{\hat{g}}.
\end{multline}
where $\nu$ is unit outerward normal vector in terms of $\bar{g}_L$.
From equation (\ref{leq}), we have
\begin{equation}\label{eq42}
\bar{J}_{L}^{-1}\bar{\Delta}_L \rho_L =-2\rho_L.
\end{equation}
Equation (\ref{jeq}) tells us that
\begin{equation}\label{eq43}
\bar{\Delta}_L \bar{J}_{L}^{-1}-(n-1)\rho_{L}^{-1}\langle \bar{\nabla}\rho_L,\bar{\nabla}\bar{J}_{L}^{-1}\rangle_{\bar{g}_L}=2\bar{J}_{L}^{-3}|\bar{\nabla}\bar{J}_{L}|_{\bar{g}_L}^2+ (n+1)\rho_{L}^{-2}\bar{J}_{L}^{-2}\left|\bar{\nabla}^2 \rho_L -\frac{\bar{\Delta}_L \rho_L}{n+1}\bar{g}_L\right|^{2}_{\bar{g}_L}.
\end{equation}
Therefore, together equation (\ref{eq41}) with (\ref{eq42}) and (\ref{eq43}), we have
\begin{multline}\label{eq44}
\frac{n^2}{n+1}\int_{M}\frac{1}{\hat{J}}dS_{\hat{g}}=2n\int_{X}\rho_L dV_{\bar{g}_L}
\\
+\int_{X}\left(2\rho_L \bar{J}_{L}^{-3}|\bar{\nabla}\bar{J}_{L}|_{\bar{g}_L}^2+(n+1)\rho^{-1}\bar{J}_{L}^{-2}\left|\bar{\nabla}^2 \rho_L -\frac{\bar{\Delta}_L \rho_L}{n+1}\bar{g}_L\right|^{2}_{\bar{g}_L}\right)dV_{\bar{g}_L}.
\end{multline}
Thus we obtain the inequality (\ref{mhk4}).

If the equality in (\ref{mhk4}) holds, from (\ref{eq44}) we immediately have $\bar{J}_L = \frac{n+1}{n}\hat{J}$ is constant and
\[ 
\bar{\nabla}^2 \rho_L -\frac{\bar{\Delta}_L \rho_L}{n+1}\bar{g}_L=0.
\]
WLOG, let $\bar{J}_L = \frac{n+1}{n}\hat{J}=\frac{n+1}{2}$, then
\[
\bar{\Delta}_L \rho_L =-2\bar{J}_L \rho_L=-(n+1)\rho_L,
\]
and
\[
\bar{\nabla}^2 \rho_L +\rho_L\bar{g}_L=0, \quad \rho_{L}|_{M}=0.
\]
By an Obata type result in \cite{Re}, $(\overline{X},M, \bar{g}_L)$ must be isometric to the half sphere $(\mathbb{S}_{+}^{n+1},\mathbb{S}^{n}, g_{S^{n+1}})$. Thus $(X,g_+)$ is isometric to Hyperbolic space due to \cite{Qi,ST, DJ, LQS, CLW}.
\end{proof}

\end{document}